\newcommand{\RN}[1]{%
	\textup{\uppercase\expandafter{\romannumeral#1}}%
}
\def\bp{{\bar\partial}}
\def\pa{\partial}
\def\C{\mathbb{C}}
\def\P{\mathbf{P}}
\def\R{\mathbb{R}}
\newcommand{\bfR}{\mathbf{R}}
\newcommand{\calK}{{\mathcal K}}
\newcommand{\bfK}{{\mathbf K}}
\newcommand{\erfc}{\operatorname{erfc}}
\newcommand{\re}{\operatorname{Re}}
\newcommand{\im}{\operatorname{Im}}
\newcommand{\Ham}{\mathbf{H}}
\newcommand{\Prob}{{\mathbf{P}}}
\renewcommand{\d}{{\partial}}
\newcommand{\Lap}{\Delta}
\newcommand{\1}{\mathbf{1}}
\newcommand{\para}{\mathbf{c}}
\newcommand{\point}{\alpha}
\theoremstyle{plain}
\newtheorem*{thm*}{Theorem}
\newtheorem{thm}{Theorem}[section]
\newtheorem{lem}[thm]{Lemma}
\newtheorem{cor}[thm]{Corollary}
\theoremstyle{definition}
\newtheorem*{eg*}{Example}
\newtheorem*{egs*}{Examples}
\newtheorem*{def*}{Definition}
\theoremstyle{remark}
\newtheorem*{rmk*}{Remark}
\newtheorem*{rmks*}{Remarks}
\numberwithin{equation}{section}
\begin{document}
\title[Random normal matrices in the almost-circular regime]{Random normal matrices in the almost-circular regime}


\author{Sung-Soo Byun}
\address{School of Mathematics, Korea Institute for Advanced Study, 85 Hoegiro, Dongdaemun-gu, Seoul 02455, Republic of Korea}
\email{sungsoobyun@kias.re.kr}

\author{Seong-Mi Seo}
\address{Department of Mathematical Sciences, Korea Advanced Institute of Science and Technology, Daejeon, 34141, Republic of Korea}
\email{seongmi@kaist.ac.kr}
 

\thanks{Sung-Soo Byun was partially supported by Samsung Science and Technology Foundation (SSTF-BA1401-51). Seong-Mi Seo was partially supported by the National Research Foundation of Korea (2019R1A5A1028324, 2019R1F1A1058006). }

\begin{abstract}
We study random normal matrix models whose eigenvalues tend to be distributed within a narrow ``band'' around the unit circle of width proportional to $\frac1n$, where $n$ is the size of matrices.
For general radially symmetric potentials with various boundary conditions, we derive the scaling limits of the correlation functions, some of which appear in the previous literature notably in the context of almost-Hermitian random matrices. 
We also obtain that fluctuations of the maximal and minimal modulus of the ensembles follow the Gumbel or exponential law depending on the boundary conditions.   
\end{abstract}
\maketitle

\section{Introduction} \label{Section_Intro}

In non-Hermitian random matrix theory, one often encounters annular domains in the limiting spectral distribution of certain models. 
A typical example of such a model is the induced Ginibre ensemble \cite{MR2881072}, an extension of the Ginibre ensemble to include zero eigenvalues.
This model is described with two parameters $a_n$ and $b_n$ $(a_n > b_n)$, which possibly depend on the size $n$ of the matrix. 
Here, the former parameter $a_n$ is related to the scaling factor and the latter one $b_n$ is related to the zero modes of the model.
(See \eqref{Gibbs} and \eqref{Qn induced} below for a precise description.)
Then the eigenvalues $\{\zeta_j\}_1^n$ of the induced Ginibre ensemble tend to be uniformly distributed on the annulus 
\begin{equation} \label{Sn induced}
S_n^I:=\Big\{ \zeta \in \C: \sqrt{ \frac{b_n}{a_n} } \le |\zeta| \le \sqrt{ \frac{b_n+1}{a_n} } \Big\}. 
\end{equation}
In particular, with some specific choices of $a_n$ and $b_n$, the annulus $S_n^I$ forms a narrow band around the unit circle of width proportional to $\frac1n$, see Figure~\ref{Fig_AC}.   
In what follows, we refer to this situation as the \emph{almost-circular regime} that will be studied in a more general context. 

\begin{figure}[h!]
	\begin{center}
		\includegraphics[width=0.7\textwidth]{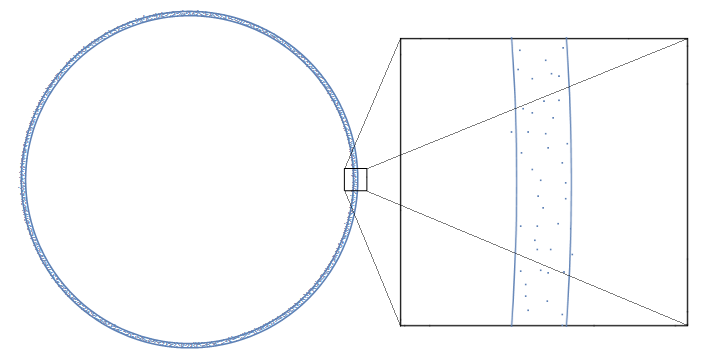}	
	\end{center}
	\caption{Eigenvalues of an induced Ginibre matrix in the almost-circular regime. 
	}
	\label{Fig_AC}
\end{figure}

From the microscopic point of view, such almost-circular ensembles can be viewed as point processes mainly distributed on a strip. 
As this is a characteristic feature of almost-Hermitian random matrices \cite{fyodorov1997almost} (or random normal matrices near a cusp type singularity \cite{MR4030288}), one can expect from the general universality principles in random matrix theory that almost-circular ensembles lie in the same universality class previously appearing in these models. 
Furthermore, contrary to almost-Hermitian random matrices, where only a few specific models were analysed (see e.g. \cite{akemann2016universality,AB21,bender2010edge,osborn2004universal,akemann2010interpolation}), almost-circular ensembles provide concrete models allowing explicit asymptotic analysis possible.

In this note, we aim to derive various scaling limits of almost-circular ensembles, which give rise to further universality classes beyond the well-known deformed sine kernel introduced by Fyodorov, Khoruzhenko, and Sommers \cite{MR1431718,MR1634312,fyodorov1997almost}. 
To be more precise, we shall consider the ensemble \eqref{Gibbs} with a general radially symmetric potential $Q_n$ and derive scaling limits under various boundary confinements. 

Let us first introduce almost-circular ensembles.
We consider a configuration $\{\zeta_j\}_1^n$ of points in $\C$ whose joint probability distribution $\Prob_n$ is of the form 
\begin{equation} \label{Gibbs}
 d \P_n =\frac{1}{Z_n} e^{-\Ham_n } \, dA_n, \qquad \Ham_n:= \sum_{j >k} \log \frac{1}{|\zeta_j-\zeta_k|^2}+n \sum_{j=1}^n  Q_n(\zeta_j),
\end{equation}
where $dA_n:=(dA)^{\otimes n}$ is the normalised volume measure on $\C^n$ with $dA(\zeta)=\frac{1}{\pi}d^2\zeta$, and $Z_n$ is the partition function which turns $\P_n$ into a probability measure.
Here, $Q_n: \C \to \R \cup \{\infty\}$ is a suitable function called external potential.

We remark that the induced Ginibre ensemble is associated with the potential
\begin{equation}\label{Qn induced}
Q_n^I(\zeta)= a_n |\zeta|^2-2b_n\log |\zeta|.
\end{equation}
This is indeed a special case of a more general Mittag-Leffler potentials, see e.g.  \cite{byun2021lemniscate,charlier2021large,ameur2018random} and references therein. 
The Mittag-Leffler ensembles are contained in the class of models under consideration in this note. 

Now we describe precise assumptions on the potentials $Q_n$. 
We begin with standard assumptions from the logarithmic potential theory \cite{ST}. 
By definition, the external potential $Q_n$ is a lower semicontinuous function which is finite on some set of positive capacity and satisfies
\begin{equation}\label{uniform}
\liminf_{\zeta\to\infty}\frac {Q_n(\zeta)}{2\log|\zeta|}\ge
k,
\end{equation}
where $k>1$ is a fixed constant independent of $n$. 
Then there exists a probability measure $\sigma_{Q_n}$ that minimises the energy functional
\begin{equation}\label{Ilog}
I_{Q_n}[\mu]=\iint_{\C^2} \log\frac 1 {|\zeta-\eta|}\, d\mu(\zeta)\, d\mu(\eta)+\int_\C Q_n \,d\mu. 
\end{equation}
The support $S_n$ of the equilibrium measure $\sigma_{Q_n}$ is called the \textit{droplet}, see e.g. \cite{HM13}.
It is also well known that if $\sigma_{Q_n}$ is absolutely continuous with respect to the area measure $dA$, the density on $S_n$ is $\Delta Q_n$, where $\Lap = \frac{1}{4}(\d_x^2 + \d_y^2)$ is the quarter of the usual Laplacian.

Next, let us set the stage to investigate almost-circular ensembles. 
We consider a radially symmetric potential 
\begin{equation} \label{Q radially symmetric}
Q_n(\zeta)=g_n(r), \qquad r=|\zeta|, \qquad g_n: \R_+ \to \R,
\end{equation} 
where $g_n$ is $C^3$-differentiable on $\R_+$. Without loss of generality, we shall assume that $g_n(1)=0$ and  $g_n'(1)=1$. 
We also assume that $Q_n$ is subharmonic in $\C$ and  strictly subharmonic in a neighbourhood of the unit circle.
Then the associated droplet $S_n$ is given by the annulus
\begin{equation} \label{S Qn}
S_n:=\{ \zeta \in \C \, :  \, r_0 \le |\zeta| \le r_1 \},
\end{equation}
where $r_0 \equiv r_{n,0}$ and $r_1 \equiv r_{n,1}$ are the pair of constants satisfying 
\begin{equation} \label{r0 r1 eq}
r_0\,g_n'(r_0)=0, \qquad r_1 \,g_n'(r_1)=2,
\end{equation}
see \cite[Section IV.6]{ST}. 
By  \eqref{Qn induced} and \eqref{r0 r1 eq}, one can easily see that the droplet \eqref{Sn induced} of the induced Ginibre ensemble is a special case of \eqref{S Qn}. 

We assume that $|g_n'''| \leq C n$ in a neighbourhood of the unit circle for some constant $C$, and the limit 
\begin{equation} \label{rho}
\rho:=\lim_{n \to \infty} \sqrt{\frac{n}{\Lap Q_n(1)}} \in (0,\infty)
\end{equation}
exists. 
Then it is easy to see that
\begin{equation} \label{r0 r1}
r_0=1-\frac{\rho^2}{4n}+o(n^{-1}), \qquad r_1=1+\frac{\rho^2}{4n}+o(n^{-1}).
\end{equation}
This behaviour \eqref{r0 r1} makes the droplet \eqref{S Qn} close to the unit circle.
Furthermore, by \eqref{r0 r1}, one can notice that the parameter $\rho$ defined by \eqref{rho} plays the role in describing the width of the droplet, see \cite{AB21} for more about the geometric meaning of such a parameter. 

Now we introduce scaling limits of almost-circular ensembles.
For this purpose, we denote by
\begin{equation} \label{bfRnk}
	\bfR_{n,k}(\zeta_1,\cdots,\zeta_k):=\frac{1}{Z_n} \frac{n!}{(n-k)!} \int_{ \C^{n-k} }  e^{-\Ham_n }  \prod_{j=k+1}^{n} dA(\zeta_j)
\end{equation}
the $k$-point correlation function of model \eqref{Gibbs}.
In particular we write $\bfR_n \equiv \bfR_{n,1}$ for the $1$-point function. 
It is well known that $\{\zeta_j\}_{j=1}^{n}$ forms a determinantal point process \cite[Chapter 5]{forrester2010log}, namely, the $k$-point function \eqref{bfRnk} is expressed as 
\begin{equation} \label{bfKn det}
\bfR_{n,k}(\zeta_1,\cdots,\zeta_k)=\det \Big(  \bfK_n(\zeta_j,\zeta_l) \Big)_{ j,l =1 }^k,
\end{equation}
where a Hermitian function $\bfK_n$ is called a correlation kernel. 
Furthermore, $\bfK_n$ can be written in terms of the reproducing kernel for the space of analytic polynomials in $L^2(e^{-nQ_n})\,dA$:
\begin{equation} \label{bfKn ONP}
    \bfK_n(\zeta,\eta) = \sum_{j=0}^{n-1} p_{n,j}(\zeta)\overline{p_{n,j}(\eta)} e^{-\frac{n}{2} (Q_n(\zeta) + Q_n(\eta))  },
\end{equation}
where $p_{n,j}$ is the orthonormal polynomial of degree $j$ with respect to the measure $e^{-nQ_n}\,dA$.
(We refer to \cite{hedenmalm2017planar} for a recent development of the theory of planar orthogonal polynomials.)

For the local investigation, we consider a zooming point $\point_n$ and define the rescaled ensemble $\{ z_j \}_1^n$ by 
\begin{equation} \label{z zeta}
	z_j:=\sqrt{n\Lap Q_n(\point_n)} \cdot (\zeta_j-\point_n),
\end{equation}
see Figure~\ref{Fig_AC} for an illustration. 
Here, the specific choice of the rescaling factor $\sqrt{n\Lap Q_n(\point_n)} $ follows from the mean eigenvalue spacing at the zooming point $\point_n.$
In the sequel, we frequently use the notation \eqref{z zeta}.
We denote the $k$-point function $R_{n,k}$ of $\{ z_j \}_1^n$ by 
\begin{equation}\label{R_n}
R_{n,k}(z):=\frac{1}{(n\Lap Q_n(\point_n))^k} \bfR_{n,k}(\zeta), \qquad z=\sqrt{n\Lap Q_n(\point_n)} \cdot (\zeta-\point_n). 
\end{equation}
By \eqref{bfKn det}, one can observe that $R_{n,k}$ enjoys the determinatnal structure as well.
In the sequel, we also write $R_n \equiv R_{n,1}.$

The following theorem is an immediate consequence of \cite[Thoerem 3.8]{AB21}. 
This approach is based on Ward's equation, see Subsection~\ref{Subsec_Ward} for more about this method.

\begin{thm} \label{thm:onelimit_fH}
Let $\point_n = 1$. Then as $n\to \infty$, $R_n$ converges locally uniformly to the limit 
\begin{equation} \label{R free}
R^{(1)}(z) = \frac{1}{\sqrt{2\pi}} \int_{-\frac{\rho}{2}}^{\frac{\rho}{2}} e^{-\frac{1}{2}(2\re z - \xi)^2} \, d\xi.
\end{equation}
\end{thm}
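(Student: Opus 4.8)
The plan is to exploit the radial symmetry, which makes the orthonormal polynomials explicit, and then run a saddle‑point/Riemann‑sum analysis on the diagonal of the kernel \eqref{bfKn ONP}. \emph{Step 1 (reduction to a monomial sum).} Since $Q_n(\zeta)=g_n(|\zeta|)$ is radial, the monomials are orthogonal in $L^2(e^{-nQ_n}\,dA)$, so $p_{n,j}(\zeta)=\zeta^j/\sqrt{h_{n,j}}$ with $h_{n,j}:=\int_\C|\zeta|^{2j}e^{-nQ_n}\,dA=2\int_0^\infty r^{2j+1}e^{-ng_n(r)}\,dr$. Then by \eqref{bfKn ONP}, $\bfR_n(\zeta)=\bfK_n(\zeta,\zeta)=e^{-ng_n(|\zeta|)}\sum_{j=0}^{n-1}|\zeta|^{2j}/h_{n,j}$, and by \eqref{R_n} with $\zeta=1+z/c$, $c:=\sqrt{n\Lap Q_n(1)}$, it remains to show $c^{-2}\bfR_n(\zeta)\to R^{(1)}(z)$ locally uniformly in $z$.

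\emph{Step 2 (asymptotics of the norms).} The integrand $e^{\phi_{n,j}(r)}$ with $\phi_{n,j}(r)=(2j+1)\log r-ng_n(r)$ is maximal at $\tau_j$ solving $\tau_j g_n'(\tau_j)=(2j+1)/n$. Since $r\mapsto rg_n'(r)$ increases from $0$ at $r_0$ to $2$ at $r_1$ with derivative $4r\Lap Q_n(r)$, and $r_0,r_1=1+O(n^{-1})$ by \eqref{r0 r1}, one gets $\tau_j=1+O(n^{-1})$ uniformly in $0\le j\le n-1$. Using $g_n'(1)=1$ and $\Lap Q_n(1)=\tfrac14(g_n''(1)+1)$, together with $|g_n'''|\le Cn$ to control the variation of $g_n''$ over the $O(n^{-1})$‑window, one finds $\phi_{n,j}''(\tau_j)=-(2j+1)\tau_j^{-2}-ng_n''(\tau_j)=-4n\Lap Q_n(1)(1+O(n^{-1}))$, the $g_n''$‑term (of order $n^2$) dominating the rest (of order $n$). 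Laplace's method — legitimate because $\tau_j$ is an interior non‑degenerate maximum while $\phi_{n,j}\to-\infty$ at $0$ and $\infty$ by subharmonicity and \eqref{uniform} — then yields, uniformly in $j$,
\[ h_{n,j}=\tau_j^{2j+1}e^{-ng_n(\tau_j)}\sqrt{\tfrac{2\pi}{n\Lap Q_n(1)}}\,(1+o(1)). \]

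\emph{Step 3 (exponent expansion and Riemann sum).} Write $\sigma=|\zeta|$, so that $c(\sigma-1)=\re z+o(1)$, and set $w_j:=c(\tau_j-1)$. Substituting Step 2 into Step 1 gives $c^{-2}\bfR_n(\zeta)=(c\sqrt{2\pi})^{-1}\sum_{j=0}^{n-1}\tau_j^{-1}e^{E_{n,j}}(1+o(1))$, where $E_{n,j}=2j\log(\sigma/\tau_j)-n(g_n(\sigma)-g_n(\tau_j))$. Taylor‑expanding $\log$ and $g_n$ about $r=1$ (the cubic remainders vanishing by $|g_n'''|\le Cn$ and $\sigma-1,\tau_j-1=O(n^{-1})$), the linear parts cancel and $E_{n,j}=-2n\Lap Q_n(1)(\sigma-\tau_j)^2+o(1)=-2(\re z-w_j)^2+o(1)$. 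Differentiating $\tau_j g_n'(\tau_j)=(2j+1)/n$ shows $w_{j+1}-w_j=(2c)^{-1}(1+o(1))\to0$, while $w_0=-\rho/4+o(1)$ and $w_{n-1}=\rho/4+o(1)$ by \eqref{r0 r1}; hence $\sum_j e^{-2(\re z-w_j)^2}$ is a Riemann sum for $2c\int_{-\rho/4}^{\rho/4}e^{-2(\re z-w)^2}\,dw$, so that
\[ c^{-2}\bfR_n(\zeta)\longrightarrow\sqrt{\tfrac2\pi}\int_{-\rho/4}^{\rho/4}e^{-2(\re z-w)^2}\,dw=\frac1{\sqrt{2\pi}}\int_{-\rho/2}^{\rho/2}e^{-\frac12(2\re z-\xi)^2}\,d\xi=R^{(1)}(z) \]
after $\xi=2w$; as all estimates are uniform for $z$ in a fixed compact set, the convergence is locally uniform.

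\emph{Main obstacle.} The delicate point is making Steps 2–3 genuinely \emph{uniform} in $j\in\{0,\dots,n-1\}$ — especially near the endpoints $j=0,n-1$, where $\tau_j$ sits at the microscopic edge of the already $O(n^{-1})$‑thin droplet — and tracking the order‑$n^2$ cancellation in $E_{n,j}$: it is exactly the hypothesis $|g_n'''|\le Cn$ that forces the cubic Taylor remainders, multiplied by $n$ and by $j=O(n)$, to tend to $0$. A clean way to organize this is to renormalize $g_n$ near $r=1$ as $g_n(1+t/c)=t/c+2t^2+\eta_n(t)$ with $\eta_n\to0$ in $C^2_{\mathrm{loc}}$, reducing everything to one Gaussian computation together with dominated convergence for the sum over $j$.
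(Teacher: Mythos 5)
Your proof is correct, and it is a good, self-contained argument. However, it is worth clarifying the relationship to the paper's own treatment. For Theorem~\ref{thm:onelimit_fH} the paper gives no direct proof at all: it simply cites the Ward-equation result \cite[Theorem~3.8]{AB21} (cross-section convergence of bandlimited ensembles plus the classification of translation-invariant solutions of Ward's equation from \cite{MR4030288}), and separately remarks that the theorem is the $\para_1=\para_2=1$ specialization of Theorem~\ref{thm:onelimit}. What you have written is, in effect, exactly the specialization of the paper's proof of Theorem~\ref{thm:onelimit} (Lemma~\ref{lem_r tau}, Lemma~\ref{lem:L2norm}, and the Riemann-sum step in Section~\ref{Section_Scaling limits}) to $\para_1=\para_2=1$: radial symmetry reduces the kernel to a monomial sum, Laplace's method (with $|g_n'''|\le Cn$ controlling the cubic remainder uniformly) gives the norms $h_{n,j}\sim \tau_j^{2j+1}e^{-ng_n(\tau_j)}\sqrt{2\pi/(n\Lap Q_n(1))}$, and the sum over $j$ becomes a Riemann sum on $[-\rho/4,\rho/4]$ that yields $R^{(1)}$ after $\xi=2w$. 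So relative to the stated proof of Theorem~\ref{thm:onelimit_fH} you have taken a genuinely different (and more elementary) route — direct orthogonal-polynomial asymptotics rather than Ward's equation — but relative to the paper as a whole you have simply instantiated the argument it uses for Theorems~\ref{thm:onelimit} and~\ref{thm:hardwall two-sided}. The Ward-equation route buys more: it shows the limit is forced by translation invariance alone (hence universal beyond radially symmetric potentials), whereas your (and the paper's Section~\ref{Section_Scaling limits}) computation is tied to radial symmetry. A tiny cosmetic slip: in your closing remark the renormalization should read $n\,g_n(1+t/c)=nt/c+2t^2+\eta_n(t)$ (or $g_n(1+t/c)=t/c+\frac{2}{n}t^2+\cdots$), not $g_n(1+t/c)=t/c+2t^2+\eta_n(t)$, since $c^2=n\Lap Q_n(1)$; this does not affect Steps 1--3, which are fine.
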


We emphasise that the scaling limit \eqref{R free} agrees with the one appearing in the context of almost-Hermitan random matrices \cite{MR1431718,MR1634312,fyodorov1997almost}. 
We also refer to \cite{akemann2016universality,AB21} and references therein for some known universality results in this class.
(Cf. the symplectic version of Theorem~\ref{thm:onelimit_fH} will appear in the forthcoming work \cite{byun2022symplectic}.)

Note that we intentionally add the superscript $(1)$ in \eqref{R free} since Theorem~\ref{thm:onelimit_fH} can be realised as a special case of our results described in the following section. 
This will be clarified below.
For the graph of $R^{(1)}$, see Figure~\ref{Fig_R free hard} (A).

The limiting $1$-point function determines a unique determinantal point field, see \cite[Lemma 1]{MR4030288}. 
We also refer to \cite[Theorem 1.1]{MR3975882} and \cite[Lemma 3.5]{AB21} for the structure of a limiting correlation kernel of the rescaled system. 
To be more precise, for a class of potentials which are strictly subharmonic near the droplet, a limiting correlation kernel $K$ of a properly rescaled system is of the form 
\begin{equation} \label{K structure}
K(z,w) = G(z,w)\cdot L(z,w), \qquad G(z,w) = e^{z\bar{w} - \frac{1}{2}|z|^2 - \frac{1}{2}|w|^2},
\end{equation}
where $L$ is Hermitian and analytic in $z$ and $\bar{w}$.  
Here $G$ is the Ginibre kernel, cf. \cite{forrester2010log}. 
Then as a consequence of \eqref{K structure}, one can easily see the locally uniform convergence
\begin{equation} \label{K FKS}
R_{n,k} \to \det \Big(K^{(1)}(z_j,z_l)\Big)_{j,l=1}^{k}, \qquad K^{(1)}(z,w) =   G(z,w)\,\frac{1}{\sqrt{2\pi}} \int_{-\frac{\rho}{2}}^{\frac{\rho}{2}} e^{-\frac{1}{2}(z+\bar{w} - \xi)^2} \, d\xi .
\end{equation}
We remark that the kernel $K^{(1)}$ in \eqref{K FKS} interpolates well-known Ginibre and sine kernels, see e.g. \cite[Remark 4 (c)]{akemann2016universality}.
In the following section, we discuss several generalisations of Theorem~\ref{thm:onelimit_fH}.

\section{Discussions of main results}

In this section, we introduce our model and state main results. 
Throughout this note, we shall keep assumptions in Section~\ref{Section_Intro} of the potential $Q_n$. 

From now on, we refer to the situation in Theorem~\ref{thm:onelimit_fH} where there is no boundary constraint as the \emph{free boundary} or \emph{soft edge} condition.
On the other hand, if we completely confine the gas to the droplet $S_n$, we refer to this situation as the \emph{soft/hard edge condition}. 
Equivalently, we redefine the potential $Q_n$ by setting $Q_n(\zeta)=+\infty$ when $\zeta\not\in S_n$.
(See \cite{MR4030288,MR2921180,byun2021wronskian} and references therein for previous works studying such a boundary condition.)
Indeed this terminology comes from its Hermitian counterpart, ``soft edge meets hard edge'' situation \cite{claeys2008universality}. 

In \cite{MR4030288}, it was conjectured that for the random normal matrix ensemble in the strip with soft/hard edge condition, the limiting $1$-point function $R^{(\infty)}$ is of the form 
\begin{equation}\label{R softhard}
    R^{(\infty)}(z) = \1_{\{|\re z|<\frac{\rho}{4}\}}\cdot \int_{-\frac{\rho}{2}}^{\frac{\rho}{2}} \frac{e^{-\frac{1}{2}(2\re z - \xi)^2}}{\int_{-\frac{\rho}{2}}^{\frac{\rho}{2}} e^{-\frac{1}{2}(t-\xi)^2} \, dt }\,  d\xi.
\end{equation}
(See Figure~\ref{Fig_R free hard} (B) for the graph of $R^{(\infty)}$.)
We also refer to \cite[Subsection 8.3]{AB21} for further supports of this conjecture.

\begin{figure}[h!]
    	\begin{subfigure}{0.32\textwidth}
		\begin{center}	
			\includegraphics[width=\textwidth]{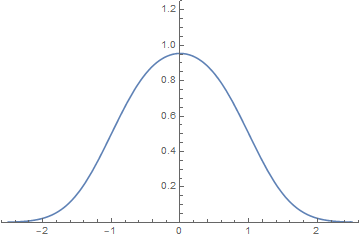}
		\end{center}
		\subcaption{free boundary}
	\end{subfigure}	
	\begin{subfigure}{0.32\textwidth}
		\begin{center}	
			\includegraphics[width=\textwidth]{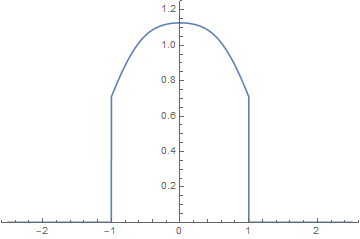}
		\end{center}
		\subcaption{soft/hard edge}
	\end{subfigure}	
    \caption{ The plots display graphs of $R^{(1)}(x)$ and $R^{(\infty)}(x)$ in \eqref{R free} and \eqref{R softhard}, where $\rho=4$.  The graphs are against the $x$-variable as they are invariant under translation in imaginary direction.  }
    \label{Fig_R free hard}
\end{figure}

In this note, we address this problem for almost-circular ensembles. 
To our knowledge, Theorems~\ref{thm:onelimit} and \ref{thm:hardwall two-sided} below provide the first concrete example of the scaling limits of the form \eqref{R softhard}. 
Therefore our results contribute to an affirmative answer for the conjecture in \cite{MR4030288}. 
Furthermore, beyond the free and the soft/hard edge boundary conditions, we shall consider the following two natural boundary conditions which interpolate between these cases:
\begin{itemize}
    \item \textbf{\emph{Soft edge with a confinement parameter}}; 
    \smallskip 
    \item \textbf{\emph{Two-sided hard edge cuts}}. 
\end{itemize}
The former one is a more potential theoretic boundary condition, whereas the latter one is a more geometric boundary condition.  

The soft edge boundary condition with a confinement parameter is defined by a linear interpolation between the external potential $Q_n$ and the logarithmic potential of the equilibrium measure.
We refer to \cite{MR4169375} for potential theoretic motivations on such boundary conditions. 
See also \cite{chafai2020macroscopic,garcia2018edge} for related boundary conditions.
This convex combination gives rise to universal point fields which interpolate between the point fields with the $1$-point function \eqref{R free} and with \eqref{R softhard}.

The two-sided hard edge boundary condition is obtained by moving the hard edge cuts that confine the gas to a narrow annulus between them. 
(We refer to \cite{seo2020edge,hedenmalm2020riemann} for similar situations.) 
This boundary condition provides another interpolation between the $1$-point functions \eqref{R free} and \eqref{R softhard}.
In particular, by moving the hard edge cuts away from the droplet, one can recover the free boundary limit \eqref{R free}.

In the following subsections, we precisely introduce such boundary conditions and formulate our results.

\subsection{Soft edge with a confinement parameter}
First, let us discuss the soft edge with a confinement parameter.
For this purpose, we consider the logarithmic potential 
\begin{equation} \label{U Qn}
U_{{Q_n}}(\zeta):=\int_\C \log \frac{1}{|\zeta-\eta|}\,d\sigma_n(\eta)
\end{equation}
of $\sigma_{Q_n}$. 
Recall that $\sigma_{Q_n}$ is the equilibrium measure of the energy functional \eqref{Ilog}.

The \textit{obstacle function} $\check{Q}_n$ pertaining to a potential $Q_n$ is defined by the maximal subharmonic function satisfying $\check{Q}_n \leq Q_n$ on $\C$ and $\check{Q}_n(\zeta) \le \log|\zeta|^2 + O(1)$ at infinity. 
Equivalently, it can be defined in terms of $U_{{Q_n}}$ in \eqref{U Qn} as 
\begin{equation} \label{Q check}
\check{Q}_n(\zeta)=C_n-2\,U_{Q_n}(\zeta),
\end{equation}
where $C_n$ is the modified Robin constant, a unique constant satisfying $2U_{Q_n}+Q_n =C_n$ on $S_n$ and $2U_{Q_n}+Q_n \ge C_n$ on $\C$ (q.e.), see \cite{ST}. 

For a radially symmetric potential \eqref{Q radially symmetric}, the logarithmic potential $U_{{Q_n}}$ can be computed explicitly. 
As a consequence, the obstacle function $\check{Q}_n$ in \eqref{Q check} is given by 
\begin{equation} \label{Q check radial}
\check{Q}_n(\zeta) = \begin{cases} 
Q_n(r_0) &  \text{if }  |\zeta| < r_0, 
\smallskip 
\\
Q_n(\zeta) & \text{if }  r_0 \leq |\zeta| \leq r_1,
\smallskip 
\\
2\log|\zeta| - 2\log r_1 + Q_n(r_1) & \text{otherwise},
\end{cases} 
\end{equation}
see e.g. \cite{MR4169375,ST}. (Recall that $r_0$ and $r_1$ are radii of the droplet $S_n$ in \eqref{S Qn}.) 
Using \eqref{Q check radial}, for a given parameter $\para = (\para_1,\para_2)$ with $\para_{1}, \para_2 \in \R_{+} \cup \{ \infty  \}$, we define 
\begin{equation} \label{Q interpolating}
Q_n^{(\para)}(\zeta) := \begin{cases}
\para_1 Q_n+(1-\para_1)\check{Q}_n(\zeta) &  \text{if }  |\zeta| < r_0, 
\smallskip 
\\
Q_n(\zeta) &  \text{if }  r_0 \leq |\zeta| \leq r_1, 
\smallskip 
\\
\para_2 Q_n+(1-\para_2)\check{Q}_n(\zeta) &  \text{otherwise}.
\end{cases}
\end{equation}
Note that the parameter $\para_1$ (resp., $\para_2$) determines the boundary condition at inner (resp., outer) circle of the droplet. 

To describe the scaling limits of almost-circular ensembles associated with the potential $Q_n^{(\para)}$, let us write
\begin{equation} \label{I para}
I^{(\para)}(t):= \frac{1-\para_1}{2}\Big(t+\frac{\rho}{2}\Big)_{-}^2 + \frac{1-\para_2}{2}\Big(t-\frac{\rho}{2}\Big)_{+}^2
\end{equation}
and
\begin{equation} \label{Phi para}
\Phi^{(\para)}(\xi) := \int_{-\infty}^{\infty} 
e^{-\frac{1}{2}(t-\xi)^2 + I^{(\para)}(t)}\, dt.
\end{equation}
(Here, we write $a_{+} = \max(a,0)$, $a_{-} = \min(a,0)$, and  $a_{+}^2=(a_{+})^2$, $a_{-}^2 = (a_{-})^2$ to avoid bulky notation.) 
These functions are building blocks to define
\begin{equation} \label{F para}
    F^{(\para)}(z) := \int_{-\frac{\rho}{2}}^{\frac{\rho}{2}} \frac{ e^{-\frac{1}{2}(z - \xi)^2}}{\Phi^{(\para)}(\xi)} \, d\xi.
\end{equation}

Let $\{\zeta_j\}_1^n$ be the particle system \eqref{Gibbs} in external potential $Q_n^{(\para)}$ and $\{z_j\}_1^n$ be the rescaled system at $\alpha_n=1$, see \eqref{z zeta}.
Notice that by \eqref{r0 r1}, the point $\alpha_n=1$ is asymptotically  the midpoint between $r_0$ and $r_1$. 
We write $R_n^{(\para)}$ for the $1$-point function of the rescaled system. 

We obtain the following result, which recovers Theorem~\ref{thm:onelimit_fH} with the specific choice $\para_1=\para_2=1$.

\begin{thm} \label{thm:onelimit} 
\textbf{\textup{(Scaling limits for the soft edge with a confinement parameter)}}
As $n \to \infty$, the $1$-point function $R_n^{(\para)}$ converges locally uniformly to the limit 
\begin{equation} \label{R int}
R^{(\para)}(z) = F^{(\para)}(2 \re z ) \cdot \exp(I^{(\para)}(2\re z)), 
\end{equation}
where $F^{(\para)}$ and $I^{(\para)}$ are given by \eqref{F para} and \eqref{I para}.  
\end{thm}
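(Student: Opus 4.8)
The strategy is to reduce everything to the asymptotics of the orthonormal polynomials for the modified weight $e^{-nQ_n^{(\para)}}\,dA$ and then evaluate the kernel \eqref{bfKn ONP} at the zooming point $\point_n=1$. Because $Q_n^{(\para)}$ is radially symmetric, the monomials $\zeta^j$ are already orthogonal, so $p_{n,j}(\zeta) = \zeta^j / \sqrt{h_{n,j}}$ with $h_{n,j} = \int_\C |\zeta|^{2j} e^{-nQ_n^{(\para)}(\zeta)}\,dA(\zeta) = 2\int_0^\infty r^{2j+1} e^{-n g_n^{(\para)}(r)}\,dr$, where $g_n^{(\para)}$ is the radial profile of $Q_n^{(\para)}$. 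The first step is a Laplace-type analysis of $h_{n,j}$: writing $j = j(n)$ for the indices that contribute near $|\zeta|=1$ (i.e. $j/n \to 1$), I would substitute $r = 1 + t/\sqrt{n\Lap Q_n(1)}$ and Taylor-expand. The function $n g_n(r) - (2j+1)\log r$ has, on the droplet, a minimum whose location tracks $j/n$; the quadratic term produces the Gaussian $e^{-\frac12(t-\xi)^2}$ with $\xi$ a rescaled version of $j/n - 1$, and the modification of the potential outside $[r_0,r_1]$ (the $\para_1,\para_2$ convex combinations with the obstacle function) contributes exactly the extra factor $e^{I^{(\para)}(t)}$, since on $|\zeta|>r_1$ one has $\check Q_n$ harmonic so $Q_n^{(\para)} - \check Q_n$ is, to leading quadratic order, $\frac{1-\para_2}{2}(t-\frac\rho2)_+^2$ in the rescaled variable (and symmetrically at $r_0$). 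This identifies $h_{n,j}$, up to a common $j$-independent prefactor, with $\Phi^{(\para)}(\xi)$ evaluated at the appropriate $\xi$.

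The second step is to assemble the kernel. Plugging $p_{n,j}(\zeta) = \zeta^j/\sqrt{h_{n,j}}$ into \eqref{bfKn ONP} at $\zeta = \eta = \point_n + z/\sqrt{n\Lap Q_n(1)}$ and using \eqref{R_n}, the $1$-point function $R_n^{(\para)}(z)$ becomes a Riemann sum over $j=0,\dots,n-1$ of terms of the shape $|\zeta|^{2j} e^{-n Q_n^{(\para)}(\zeta)} / h_{n,j}$. With the change of variables $j \mapsto \xi$ (spacing $\sim 1/\sqrt{n}$ after rescaling, consistent with the $d\xi$ in \eqref{F para}), and using the $h_{n,j}$ asymptotics from Step 1 together with $|\zeta|^{2j} e^{-nQ_n^{(\para)}(\zeta)} \sim e^{-\frac12(2\re z - \xi)^2} \cdot e^{\text{(boundary correction at } z)}$, the sum converges to $\int_{-\rho/2}^{\rho/2} \frac{e^{-\frac12(2\re z-\xi)^2}}{\Phi^{(\para)}(\xi)}\,d\xi$ times the boundary factor $e^{I^{(\para)}(2\re z)}$; the integration range $[-\rho/2,\rho/2]$ appears because only the indices with the minimum of the radial phase landing inside the droplet — equivalently $|\xi| < \rho/2$ by \eqref{r0 r1} — give an $O(1)$ contribution, the rest being exponentially small. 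Recognising $F^{(\para)}(2\re z)$ in \eqref{F para} gives \eqref{R int}. Local uniformity in $z$ follows from the uniformity of the Laplace estimates on compact sets.

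The main obstacle is controlling the Laplace asymptotics of $h_{n,j}$ \emph{uniformly in $j$ over the whole range} $0 \le j \le n-1$, and especially uniformly as the minimiser of the radial phase crosses the edges $r_0$, $r_1$, where the potential $Q_n^{(\para)}$ has only $C^1$ (not $C^2$) matching and the convex-combination kink enters. Near those transitional $j$ the effective integrand is a Gaussian multiplied by $e^{(1-\para_i)\cdot(\text{quadratic in the positive/negative part})}$, and one must show the resulting integral is exactly $\Phi^{(\para)}(\xi)$ to the required precision — including that the contribution from $r<r_0$ and $r>r_1$ is captured correctly rather than discarded. Here the hypotheses $|g_n'''| \le Cn$ and the existence of the limit \eqref{rho} are what make the error terms $o(1)$; I would handle the crossover region by a dominated-convergence argument on the rescaled integral, after establishing a $j$-uniform Gaussian upper bound on the integrand's tails to justify exchanging limit and integration. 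A secondary point requiring care is the tail sum over $j$ with $j/n$ bounded away from $1$ (both near $0$ and near $n$): one must check these give negligible contributions to $\bfK_n(\zeta,\zeta)$ at $|\zeta|\approx 1$, which follows from standard one-dimensional large-deviation bounds for the radial integrals, exactly as in the soft-edge case treated via \cite{AB21}.
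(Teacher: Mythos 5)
Your overall strategy is the paper's: exploit radial symmetry so that the orthonormal polynomials are normalized monomials, establish Laplace-type asymptotics for the radial $L^2$-norms $h_{n,j}=\|\zeta^j\|^2_{nQ_n^{(\para)}}$ (this is exactly Lemma~\ref{lem:L2norm}, which identifies $h_{n,j}$ with $e^{-nv_{n,j}(r_{\tau(j)})}\frac{\rho}{n}\Phi^{(\para)}(\rho(\tau(j)-\tfrac12))$ uniformly in $j$), and then recognize $R_n^{(\para)}(z)$ as a Riemann sum that converges to \eqref{R int}. Your treatment of the edge contributions via the $C^1$-matching of $Q_n^{(\para)}$ and $\check Q_n$, producing the factors $e^{I^{(\para)}}$ both in the norm integral and the numerator, is exactly how the $I^{(\para)}$ term arises in the paper.

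However, you have imported the wrong intuition about which indices contribute and at what density, and this would lead you astray if you wrote out the details. In the almost-circular regime $\Lap Q_n(1)\sim n/\rho^2$, so the zooming factor $\sqrt{n\Lap Q_n(1)}$ is of order $n/\rho$, not $\sqrt{n}$. Consequently the affine change of variable $j\mapsto\xi=\rho(\tau(j)-\tfrac12)$ has step $\rho/n$, not $1/\sqrt n$, and the entire index range $j=0,\dots,n-1$ sweeps the interval $[-\rho/2,\rho/2]$; the Laplace critical point $r_{\tau(j)}$ moves across the whole thin annulus $[r_0,r_1]$ as $j$ runs over the full sum. There is therefore no ``tail over $j$ with $j/n$ bounded away from $1$'' to discard: every one of the $n$ terms contributes at order $1/n$, and the terms with $j/n$ near $0$ or $1$ are the boundary contributions at $\xi\approx\mp\rho/2$, which are an essential part of the integral, not negligible. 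Your proposed ``large-deviation tail bound'' step has nothing to estimate, and the parenthetical ``$1/\sqrt n$ spacing'' claim would be inconsistent with having exactly $n$ terms fill a bounded interval. This is the structural difference between the almost-circular setting and the genuine soft edge of a two-dimensional droplet (where a $\sqrt n$-window of $j$ contributes); once you correct this, the rest of your plan closes and reproduces the paper's proof.
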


See Figure~\ref{Fig_R soft para} for the graphs of $R^{(\para)}$. 
We remark that the corresponding correlation kernel is given by
\begin{equation*}
    K^{(\para)}(z,w) = G(z,w)\, F^{(\para)}(z+\bar{w}) \cdot \exp\Big(\frac{1}{2}I^{(\para)}(2\re z) + \frac{1}{2}I^{(\para)}(2\re w)\Big),
\end{equation*}
see \cite[Theorem 1.1]{MR4169375} for the structure of correlation kernels.
It is easy to observe that in the extremal case $\para_1=\para_2=\infty$, the $1$-point function \eqref{R int} corresponds to $R^{(\infty)}$ in \eqref{R softhard} which appears in the soft/hard edge case. In this case, the limiting point field is strictly contained in the strip $\{z\in\C : |\re z|<\frac{\rho}{4}\}$.

\begin{figure}[h!]
    	\begin{subfigure}{0.32\textwidth}
		\begin{center}	
			\includegraphics[width=\textwidth]{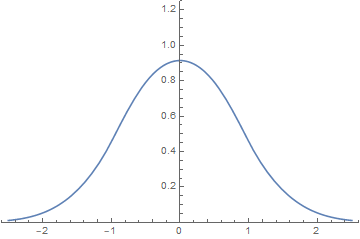}
		\end{center}
		\subcaption{$\para_1=\para_2=1/2$}
	\end{subfigure}	
	\begin{subfigure}{0.32\textwidth}
		\begin{center}	
			\includegraphics[width=\textwidth]{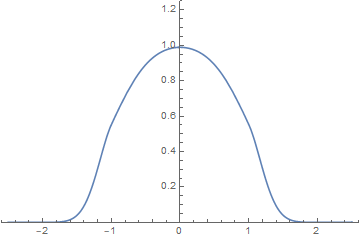}
		\end{center}
		\subcaption{$\para_1=\para_2=4$}
	\end{subfigure}	
	\begin{subfigure}{0.32\textwidth}
		\begin{center}	
			\includegraphics[width=\textwidth]{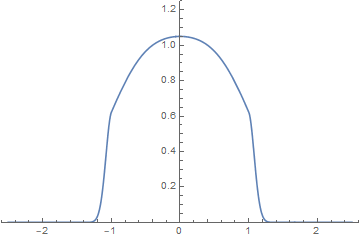}
		\end{center}
		\subcaption{$\para_1=\para_2=32$}
	\end{subfigure}	
    \caption{ The plots show graphs of $R^{(\para)}(x)$ in \eqref{R int} for a few values of $\para_1$ and $\para_2$, where $\rho=4$. One can observe that as $\para_1, \para_2 \to 1$, the graph of $R^{(\para)}$ tends to $R^{(1)}$ in Figure~\ref{Fig_R free hard} (A), whereas as $\para_1, \para_2 \to \infty$, it tends to $R^{(\infty)}$ in Figure~\ref{Fig_R free hard} (B). } \label{Fig_R soft para}
\end{figure}

\subsection{Hard edge confinements}
We now consider the \emph{hard edge cuts} near the droplet. 
For each $\tau \in \R$ with $0\leq \tau \leq 1$, there exists a unique number $r_\tau\equiv r_{n,\tau}$ such that 
\begin{equation} \label{r tau eq}
r_\tau \, g_n'(r_{\tau}) = 2\tau. 
\end{equation}
It is easy to observe from \eqref{r tau eq} that 
\begin{equation} \label{r tau asymp}
r_{\tau} = 1 - \frac{\rho^2}{4n}(1-2\tau) + o(n^{-1}),
\end{equation}
see Lemma~\ref{lem_r tau}. 
Let us write 
\begin{equation} \label{S tau Gamma tau}
S_\tau \equiv S_{n,\tau} := \{\zeta \in \C : r_0 \leq |\zeta| \leq r_\tau \}, \qquad   \Gamma_\tau\equiv \Gamma_{n,\tau} := \{\zeta \in \C : |\zeta| = r_{\tau}\}. 
\end{equation}
We now consider the case where the hard edge cuts are along the circles $\Gamma_{\tau_1}$ and $\Gamma_{\tau_2}$ so that the particles are confined in the thin annulus 
\begin{equation} \label{S tau2 tau1 hard}
S_{\tau_2}\setminus S_{\tau_1}= \{\zeta \in \C : r_{\tau_1} \leq |\zeta| \leq r_{\tau_2} \}, \qquad (0\leq \tau_1 < \tau_2 \leq 1).
\end{equation}
Equivalently, we consider the potential 
\begin{equation} \label{Q hard edge cuts}
Q_n^{\tau_1,\tau_2}(\zeta):=
\begin{cases}
Q_n(\zeta) &\text{if }  r_{\tau_1} \le |\zeta| \le r_{\tau_2},
\smallskip 
\\
+\infty &\text{otherwise}.
\end{cases}
\end{equation}
In the presence of hard edge cuts, the associated equilibrium measure $\sigma_{Q_n} $ is no longer absolutely continuous with respect to the area measure $dA$ in general, and takes the form 
\begin{equation} \label{eq msr hard edge}
d\sigma_{Q_n} = \Lap Q \cdot \1_{S_{\tau_2}\setminus S_{\tau_1}} \, dA + \frac{\tau_1}{r_{\tau_1}}\1_{\Gamma_{\tau_1}} \,ds + \frac{1-\tau_2}{r_{\tau_2}}\1_{\Gamma_{\tau_2}} \, ds.
\end{equation}
Here, $ds$ is the normalised arc length measure: $ds(z) = (2\pi)^{-1}|dz|$. 
One may realise from \eqref{eq msr hard edge} that there are non-trivial portions of particles near the boundaries of \eqref{S tau2 tau1 hard}.

More generally, for any real numbers $\tau_1, \tau_2$ ($\tau_1< \tau_2$), one can define $r_{\tau_1}$, $r_{\tau_2}$, and $Q_n^{\tau_1,\tau_2}$ when $n$ is sufficiently large. 
Note that when $\tau_1 < 0$ and $\tau_2>1$, the equilibrium measure associated with $Q^{\tau_1,\tau_2}$ is not affected by the hard edge cuts. 

We rescale the point process \eqref{Gibbs} associated with $Q_n^{\tau_1,\tau_2}$ in \eqref{Q hard edge cuts} at the zooming point $\alpha_n=1$, i.e. $z_j = \sqrt{n\Lap Q_n(1)}(\zeta_j - 1)$. 
We denote by $R_n^{\tau_1,\tau_2}$ the $1$-point function of the rescaled system. 
Then we obtain the following theorem. 

\begin{thm} \label{thm:hardwall two-sided} \textbf{\textup{(Scaling limits for the two-sided hard edge cuts)}}
As $n\to \infty$, the $1$-point function $R_n^{\tau_1,\tau_2}$ converges locally uniformly to the limit
\begin{equation} \label{R hardwall two-sided}
R^{\tau_1,\tau_2}(z) = \1_{\{\frac{\rho}{4}(2\tau_1-1)\leq x \leq \frac{\rho}{4}(2\tau_2-1)\}}\int_{-\rho/2}^{\rho/2} \frac{e^{-\frac{1}{2}(2x-\xi)^2}}{\int_{\frac{\rho}{2}(2\tau_1-1)}^{\frac{\rho}{2}(2\tau_2-1)} e^{-\frac{1}{2}(t-\xi)^2}\,dt} \, d\xi,\qquad x=\re z.
\end{equation}
\end{thm}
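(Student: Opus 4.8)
The plan is to proceed exactly as in the free-boundary and soft/hard edge cases, exploiting the radial symmetry of $Q_n^{\tau_1,\tau_2}$ together with the determinantal structure. Since the potential is radially symmetric, the orthonormal polynomials in $L^2(e^{-nQ_n^{\tau_1,\tau_2}}\,dA)$ are monomials up to normalisation, say $p_{n,j}(\zeta) = \zeta^j / \sqrt{h_{n,j}}$ with $h_{n,j} = \int_{r_{\tau_1}\le|\zeta|\le r_{\tau_2}} |\zeta|^{2j} e^{-nQ_n(\zeta)}\,dA(\zeta)$. By \eqref{bfKn ONP} the correlation kernel is $\bfK_n(\zeta,\eta) = \sum_{j=0}^{n-1} (\zeta\bar\eta)^j h_{n,j}^{-1} e^{-\frac n2 (Q_n(\zeta)+Q_n(\eta))}$, and in particular the $1$-point function at $\zeta = 1 + z/\sqrt{n\Lap Q_n(1)}$ reduces to a single sum. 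First I would perform a Laplace-type analysis of the norms $h_{n,j}$: writing the radial integral $h_{n,j} = 2\int_{r_{\tau_1}}^{r_{\tau_2}} r^{2j+1} e^{-n g_n(r)}\,dr$, the exponent $2j\log r - n g_n(r)$ is maximised at the point $r$ solving $2j = n r g_n'(r)$, which for $j$ near $n$ corresponds via \eqref{r tau eq} to a value $r_\tau$ with $\tau = j/n$. Parametrising $j = n(1+ s\rho/\sqrt{n\Lap Q_n(1)})$-style near the critical index and Taylor expanding $g_n$ to second order (using $|g_n'''|\le Cn$ to control the error and \eqref{rho} to identify the second derivative with $2\Lap Q_n(1)/n \sim 2/\rho^2$ after rescaling), the Gaussian integral localises; the crucial new feature compared to the free case is that when the critical point $r$ would fall outside $[r_{\tau_1},r_{\tau_2}]$ — equivalently when the rescaled index parameter leaves the interval $[\tfrac\rho2(2\tau_1-1),\tfrac\rho2(2\tau_2-1)]$ — the integral is instead dominated by the endpoint, producing a truncated Gaussian $\int_{\frac\rho2(2\tau_1-1)}^{\frac\rho2(2\tau_2-1)} e^{-\frac12(t-\xi)^2}\,dt$ in the denominator of \eqref{R hardwall two-sided}.

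Next I would assemble the Riemann sum. After the rescaling \eqref{z zeta} with $\alpha_n=1$, each term $(\zeta\bar\zeta)^j h_{n,j}^{-1} e^{-nQ_n(\zeta)}$ becomes, up to negligible errors, of the form $e^{-\frac12(2\re z - \xi_j)^2} / \big(\text{truncated Gaussian in }\xi_j\big)$ where $\xi_j = \rho(j/n - 1/2)\cdot(\text{normalisation})$ ranges over a grid of spacing $\sim 1/\sqrt{n}$ as $j$ runs from $0$ to $n-1$; summing over $j$ and using that only $j$ with $|j/n - 1/2| = O(n^{-1/2})$ contribute, the sum converges to the integral $\int_{-\rho/2}^{\rho/2}(\cdots)\,d\xi$. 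The indicator $\1_{\{\frac\rho4(2\tau_1-1)\le x\le \frac\rho4(2\tau_2-1)\}}$ emerges because when $2x = 2\re z$ lies outside $[\frac\rho2(2\tau_1-1),\frac\rho2(2\tau_2-1)]$, the numerator $e^{-\frac12(2x-\xi)^2}$ is exponentially small uniformly over $\xi\in[-\rho/2,\rho/2]$ relative to the denominator, forcing the limit to vanish — this is precisely the hard-wall confinement manifesting microscopically. Local uniformity comes from the fact that all estimates are uniform for $z$ in compact subsets of $\C$.

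A cleaner alternative, which I would actually prefer to write up, is to invoke \cite[Theorem 1.1]{MR4169375} (or the analogous structural result for hard edges referenced in the excerpt) to know a priori that the limiting kernel has the form \eqref{K structure} with $L$ Hermitian entire in $(z,\bar w)$, reducing the problem to identifying $L$ on the diagonal, i.e. identifying $R^{\tau_1,\tau_2}(z) = G(z,z)L(z,z) = L(z,z)$ since $G(z,z)=e^{-|z|^2}\cdot e^{|z|^2}$... more carefully $G(z,z)=1$ only formally, so one identifies $L(z,z) = R^{\tau_1,\tau_2}(z)$ directly through the monomial computation above restricted to the diagonal, which is the lightest computation. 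The main obstacle is the uniformity of the Laplace asymptotics for $h_{n,j}$ \emph{across the full range} $0\le j\le n-1$, and in particular handling the transition regime where the critical radius crosses an endpoint $r_{\tau_i}$: here one must show the endpoint contribution smoothly interpolates and that error terms from the $|g_n'''|\le Cn$ bound remain $o(1)$ after the $\sqrt n$ rescaling, uniformly. Everything else — the Riemann-sum passage, the extraction of the indicator, local uniform convergence — is routine once these norm asymptotics are in hand, and closely parallels the arguments already used for Theorem~\ref{thm:onelimit}.
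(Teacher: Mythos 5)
Your overall approach—Laplace analysis of the monomial norms followed by a Riemann-sum passage—is the same as the paper's, and the truncated Gaussian in the denominator is correctly identified. However, your explanation for where the indicator $\1_{\{\frac{\rho}{4}(2\tau_1-1)\leq x \leq \frac{\rho}{4}(2\tau_2-1)\}}$ comes from is wrong, and wrong in a way that would break a rigorous write-up. You claim that for $x$ outside the strip, the numerator $e^{-\frac{1}{2}(2x-\xi)^2}$ is exponentially small uniformly over $\xi\in[-\rho/2,\rho/2]$ relative to the denominator, forcing the integral to vanish. This is false: if $x$ lies just outside $\frac{\rho}{4}(2\tau_2-1)$ (but still with $2x\in[-\rho/2,\rho/2]$, as happens whenever $\tau_2<1$), the numerator is $O(1)$ for $\xi$ near $2x$ and the integral over $[-\rho/2,\rho/2]$ is strictly positive, not vanishing. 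The true mechanism is much simpler and exact at finite $n$: the hard-wall potential $Q_n^{\tau_1,\tau_2}$ is $+\infty$ off the annulus $S_{\tau_2}\setminus S_{\tau_1}$, so the $1$-point function \eqref{Rn hard edge cuts} carries the factor $\1_{S_{\tau_2}\setminus S_{\tau_1}}(\zeta)$ identically, and under the rescaling $\zeta=1+z/\sqrt{n\Lap Q_n(1)}$ together with the expansion $r_\tau = 1 - \frac{\rho^2}{4n}(1-2\tau)+o(n^{-1})$ of Lemma~\ref{lem_r tau}, the condition $r_{\tau_1}\le|\zeta|\le r_{\tau_2}$ converges pointwise to $\frac{\rho}{4}(2\tau_1-1)\le \re z \le \frac{\rho}{4}(2\tau_2-1)$. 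The integral formula describes the limit only inside the strip, and the indicator sits in front of it as a separate, exact factor.

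A secondary issue: you write that the grid $\xi_j=\rho(\tau(j)-\tfrac12)$ has spacing $\sim n^{-1/2}$ and that only indices with $|j/n-\tfrac12|=O(n^{-1/2})$ contribute. In fact the spacing is $\rho/n$ and \emph{all} $j$ from $0$ to $n-1$ contribute to the Riemann sum, since $\xi_j$ fills out the full interval $[-\rho/2,\rho/2)$; this is precisely why the sum $\frac{\rho}{n}\sum_{j=0}^{n-1}(\cdots)$ converges to $\int_{-\rho/2}^{\rho/2}(\cdots)\,d\xi$. Your worry about the uniformity of the Laplace analysis across the transition where $r_{\tau(j)}$ crosses an endpoint $r_{\tau_i}$ is in fact not an obstacle: the Taylor expansion of $v_{n,j}$ about $r_{\tau(j)}$ is valid uniformly regardless of whether the critical point lies inside $[r_{\tau_1},r_{\tau_2}]$, and the single formula with the truncated Gaussian denominator already interpolates between the interior and tail-dominated regimes.
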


See Figure~\ref{Fig_R hardedge cuts} for the graphs of $R^{\tau_1,\tau_2}$ with a few values of $\tau_1$ and $\tau_2$. 

Note that the limiting point field of the rescaled system is completely confined in a strip which depends on the parameters $\tau_1$, $\tau_2$ of the hard edge cuts.
By \eqref{K structure}, it is easy to see that the correlation kernel associated with \eqref{R hardwall two-sided} is given by 
\begin{equation}
K^{\tau_1,\tau_2}(z,w) = G(z,w)\,\int_{-\rho/2}^{\rho/2} \frac{e^{-\frac{1}{2}(z+\bar{w}-\xi)^2}}{\int_{\frac{\rho}{2}(2\tau_1-1)}^{\frac{\rho}{2}(2\tau_2-1)} e^{-\frac{1}{2}(t-\xi)^2}\,dt} \, d\xi
\end{equation}
on the strip $\frac{\rho}{4}(2\tau_1-1) \leq \re z, \re w \leq \frac{\rho}{4}(2\tau_2-1)$, see e.g. \cite{MR4030288,AB21}.
We remark that in \cite{MR4071093}, the authors considered another notion of hard edge cuts, which yield different scaling limits. 

Notice that in the extremal case when $\tau_1 = -\infty$ and $\tau_2 = \infty$, the limiting $1$-point function $R^{\tau_1,\tau_2}$ corresponds to \eqref{R free}, which appears in the free boundary case.
On the other hand, if $\tau_1=0$ and $\tau_2=1$, it correspond to \eqref{R softhard}, which appears in the soft/hard edge case.

\begin{figure}[h!]
    	\begin{subfigure}{0.32\textwidth}
		\begin{center}	
			\includegraphics[width=\textwidth]{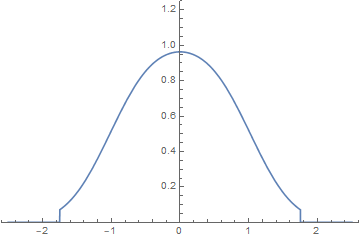}
		\end{center}
		\subcaption{$\tau_1=-\frac38$, $\tau_2=\frac{11}8$}
	\end{subfigure}	
	\begin{subfigure}{0.32\textwidth}
		\begin{center}	
			\includegraphics[width=\textwidth]{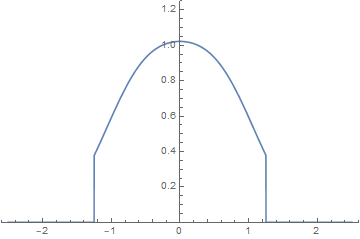}
		\end{center}
		\subcaption{$\tau_1=-\frac18$, $\tau_2=\frac98$}
	\end{subfigure}	
	\begin{subfigure}{0.32\textwidth}
		\begin{center}	
			\includegraphics[width=\textwidth]{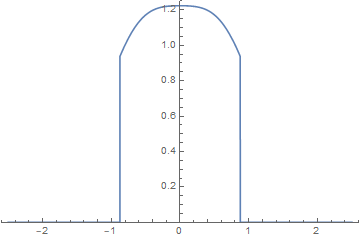}
		\end{center}
		\subcaption{$\tau_1=\frac{1}{16}$, $\tau_2=\frac{15}{16}$}
	\end{subfigure}	
    \caption{ The plots indicate graphs of $R^{\tau_1,\tau_2}(x)$ in \eqref{R hardwall two-sided}, where $\rho=4$. 
    One can easily observe that as $\tau_1 \to -\infty$ and $\tau_2 \to \infty$, the graph of $R^{\tau_1,\tau_2}$ tends to $R^{(1)}$ in Figure~\ref{Fig_R free hard} (A).
    On the other hand, as $\tau_1 \to 0$ and $\tau_2 \to 1$, it tends to $R^{(\infty)}$ in Figure~\ref{Fig_R free hard} (B). } \label{Fig_R hardedge cuts}
\end{figure}

\begin{rmk*} \textbf{(Circular limit $\rho \to 0$)} 
In the limit when $\rho \to 0$, one can recover the well-known sine kernel appearing in the circular unitary ensemble, see e.g. \cite{forrester2010log}.
To take such a limit, we rescale the kernel via $z\mapsto z/a$ with $a=\rho/2$. 
(This is due to the fact that we have kept the ``two-dimensional'' perspective, see e.g. \cite{AB21} for more details.)
The associated kernel $\widetilde{K}^{\tau_1,\tau_2}$ is given by 
\begin{equation}
\widetilde{K}^{\tau_1,\tau_2}(z,w) := \frac{1}{a^2} K^{\tau_1,\tau_2}(\frac{z}{a},\frac{w}{a}), \qquad  \frac{\rho^2}{8}(2\tau_1 -1)\leq \re z, \re w \leq \frac{\rho^2}{8}(2\tau_2-1). 
\end{equation}
Then it is easy to see that up to a cocycle, one can recover the sine kernel in the circular limit, i.e. 
\begin{equation} \label{Unitary limit}
\lim_{a \to 0} \widetilde{K}^{\tau_1,\tau_2}(z,w) = \frac{1}{\pi} \frac{\sin(u-v)}{u-v},  \qquad 
\begin{cases}
u=\im z,
\smallskip 
\\
v=\im w.
\end{cases}
\end{equation}
See Subsection~\ref{Subsec_limit hardwall} for further details.

\end{rmk*}

As a special case, let us consider the \emph{one-sided hard edge} case that the hard edge is along the outer circle $\Gamma_{\tau_2}$ but the inner one $\Gamma_{\tau_1}$ is a free boundary, i.e. $\tau_1=-\infty$ and $\tau_2=\tau$, cf. \eqref{r tau asymp} and \eqref{S tau Gamma tau}.  
This model corresponds to the external potential 
\begin{equation} \label{Q one sided}
    Q_{n}^{\tau}(\zeta) := 
    \begin{cases}
    Q_n(\zeta) &\text{if }  |\zeta| \le r_{\tau},
    \smallskip 
    \\
    +\infty &\text{otherwise}.
    \end{cases}
\end{equation}

We rescale the point process associated with the potential $Q_n^{\tau}$ in \eqref{Q one sided} at a point $r_\tau$ on the outer circle, i.e. $z_j = \sqrt{n\Lap Q_n(1)}(\zeta_j - r_\tau)$ and denote by $R_n^{\tau}$ the $1$-point function of the rescaled system. 

\begin{cor}  \label{cor:hardwall one-sided}
\textbf{\textup{(Scaling limits for the one-sided hard edge cuts)}}
As $n \to \infty$, the $1$-point function $R_n^{\tau}$ converges locally uniformly to the limit
\begin{equation} \label{R hardwall one-sided pre} 
R^\tau(z) = \1_{\{x\leq 0\}} \int_{-\rho\tau}^{\rho(1-\tau)} \frac{e^{-\frac{1}{2}(2x - \xi)^2}}{\int_{-\infty}^{0}e^{-\frac{1}{2}(t-\xi)^2 }\,dt}\, d\xi,\qquad x=\re z.
\end{equation}
In particular, the rescaled $1$-point function 
\begin{equation} \label{R hardwall one-sided}
\widetilde{R}^\tau(z):=\Big(\frac{c(\tau)}{\rho}\Big)^2 R^\tau\Big( \frac{c(\tau)}{\rho} z \Big),  \qquad c(\tau) := \Big(\sqrt{\frac{(1-\tau)^2}{4} + \frac{1}{\rho^2}} + \frac{1-\tau}{2}\Big)^{-1}
\end{equation}
satisfies 
\begin{equation} \label{R hardwall one-sided limit}
\lim_{\rho \to \infty} \widetilde{R}^\tau(z) =  
\begin{cases}
\displaystyle \1_{\{ x \leq 0\}}  \int_{-\infty}^{0} \frac{e^{-\frac{1}{2}(x-\xi)^2}}{\int_{-\infty}^{0} e^{-\frac{1}{2}(t-\xi)^2} \, dt} \, d\xi,  &\text{if }\tau=1,
\smallskip 
\\
\displaystyle  \1_{\{ x \leq 0\}} \int_{0}^{1}
\xi \cdot e^{2x\xi} \, d\xi, &\text{if }\tau \in (0,1). 
\end{cases}
\end{equation}
\end{cor}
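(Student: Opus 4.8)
The plan is to read off Corollary~\ref{cor:hardwall one-sided} from Theorem~\ref{thm:hardwall two-sided} by letting the inner hard cut recede to infinity ($\tau_1\to-\infty$) and by re-centring the zooming point from $1$ to the outer wall $r_\tau$. For the first step, observe that $Q_n^\tau$ in \eqref{Q one sided} is exactly $Q_n^{\tau_1,\tau}$ from \eqref{Q hard edge cuts} with the inner cut removed; since both potentials are radially symmetric, the orthonormal polynomials in \eqref{bfKn ONP} are normalised monomials $p_{n,j}(\zeta)=\zeta^j/\sqrt{h_{n,j}}$, with $h_{n,j}$ proportional to $\int_{0}^{r_\tau}r^{2j+1}e^{-ng_n(r)}\,dr$ (resp.\ to $\int_{r_{\tau_1}}^{r_\tau}r^{2j+1}e^{-ng_n(r)}\,dr$ for $Q_n^{\tau_1,\tau}$). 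The only change in the proof of Theorem~\ref{thm:hardwall two-sided} is therefore that the lower endpoint of this moment integral is moved from $r_{\tau_1}$ to $0$; in the Laplace analysis of $\bfK_n$ this has the single effect of pushing the lower endpoint of the $t$-integral in the denominator of \eqref{R hardwall two-sided} to $-\infty$. Equivalently, since for $\tau_1<0$ the inner cut is inactive — one has $r_{\tau_1}<r_0$, so $Q_n^{\tau_1,\tau}$ and $Q_n^\tau$ have the same equilibrium measure and the same local statistics near $r_\tau$ — one may simply take $\tau_1\to-\infty$ in the limiting formula \eqref{R hardwall two-sided} with $\tau_2=\tau$: the left endpoint $\frac{\rho}{4}(2\tau_1-1)$ of the indicator and the lower limit $\frac{\rho}{2}(2\tau_1-1)$ of the inner integral both tend to $-\infty$.

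For the re-centring, note that Theorem~\ref{thm:hardwall two-sided} is stated with $\alpha_n=1$, while Corollary~\ref{cor:hardwall one-sided} uses $\alpha_n=r_\tau$ and the same scaling factor $\sqrt{n\Lap Q_n(1)}$. By \eqref{r tau asymp}, $\sqrt{n\Lap Q_n(1)}\,(r_\tau-1)=\frac{\rho}{4}(2\tau-1)+o(1)$, so the rescaled coordinate $z$ centred at $r_\tau$ and the rescaled coordinate $\tilde z$ centred at $1$ are related by $\tilde z=z+\frac{\rho}{4}(2\tau-1)+o(1)$. Substituting $\tilde x=x+\frac{\rho}{4}(2\tau-1)$ (with $x=\re z$) into the $\tau_1\to-\infty$ version of \eqref{R hardwall two-sided}, and then shifting $\xi\mapsto\xi+\frac{\rho}{2}(2\tau-1)$ in the outer integral and $t\mapsto t+\frac{\rho}{2}(2\tau-1)$ in the denominator, the indicator becomes $\1_{\{x\le0\}}$, the outer integral becomes $\int_{-\rho\tau}^{\rho(1-\tau)}$, the numerator becomes $e^{-\frac12(2x-\xi)^2}$, and the denominator becomes $\int_{-\infty}^{0}e^{-\frac12(t-\xi)^2}\,dt$. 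This gives precisely \eqref{R hardwall one-sided pre}, and the local uniform convergence of $R_n^\tau$ is inherited from that in Theorem~\ref{thm:hardwall two-sided}.

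It remains to verify the $\rho\to\infty$ limits in \eqref{R hardwall one-sided limit}. When $\tau=1$ one has $c(1)=\rho$, hence $\widetilde{R}^{1}=R^{1}$, and letting $\rho\to\infty$ in \eqref{R hardwall one-sided pre} the range $[-\rho\tau,\rho(1-\tau)]=[-\rho,0]$ of the outer integral expands to $(-\infty,0]$; dominated convergence then gives the first line of \eqref{R hardwall one-sided limit}. When $\tau\in(0,1)$, insert the definition \eqref{R hardwall one-sided} and substitute $\xi=\rho\sigma$, so that $\sigma$ ranges over $[-\tau,1-\tau]$. For $\sigma<0$ the denominator tends to $\sqrt{2\pi}$ while the numerator is superexponentially small, so this range contributes nothing; for $\sigma>0$ one uses the Gaussian tail asymptotics $\int_{-\infty}^{0}e^{-\frac12(t-\rho\sigma)^2}\,dt=\int_{\rho\sigma}^{\infty}e^{-s^2/2}\,ds=(1+o(1))(\rho\sigma)^{-1}e^{-\rho^2\sigma^2/2}$, which after cancelling the common Gaussian factor and including the Jacobian reduces the $\sigma$-integrand to $(1+o(1))\,\rho^2\sigma\,e^{2c(\tau)x\sigma}$. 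Since $(c(\tau)/\rho)^2\rho^2=c(\tau)^2\to(1-\tau)^{-2}$ and $c(\tau)(1-\tau)\to1$, the substitution $\sigma=(1-\tau)\xi$ then yields $\lim_{\rho\to\infty}\widetilde{R}^\tau(z)=\1_{\{x\le0\}}\int_0^1\xi\,e^{2x\xi}\,d\xi$, which is the second line of \eqref{R hardwall one-sided limit}.

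The only point that is not entirely routine is the first reduction: that discarding the inner hard cut does not affect the microscopic limit at the outer wall $r_\tau$. Concretely, one must check that for the indices $j$ that actually contribute near $r_\tau$ — those for which the monomial weight $r^{2j}e^{-ng_n(r)}$ is concentrated on $\{r\ge r_0\}$ or beyond — the tail $\int_0^{r_{\tau_1}}r^{2j+1}e^{-ng_n(r)}\,dr$ is negligible compared with $\int_{r_{\tau_1}}^{r_\tau}r^{2j+1}e^{-ng_n(r)}\,dr$; granting this (equivalently, granting that the proof of Theorem~\ref{thm:hardwall two-sided} runs verbatim with the lower limit of the moment integral set to $0$), the remaining steps are elementary changes of variables and standard Laplace-type asymptotics.
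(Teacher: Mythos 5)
Your proposal is correct and follows essentially the same route as the paper. For the first assertion, the paper simply calls it ``an immediate consequence of Theorem~\ref{thm:hardwall two-sided}''; you usefully supply the two steps that make this literal, namely (i) that the Laplace analysis of $\|\zeta^j\|^2$ is insensitive to replacing the lower moment-integral endpoint $r_{\tau_1}$ by $0$ (equivalently, $\tau_1\to-\infty$ in \eqref{R hardwall two-sided}), and (ii) the translation $\tilde x=x+\frac{\rho}{4}(2\tau-1)$ coming from re-centring $\alpha_n$ from $1$ to $r_\tau$, together with the corresponding shifts of $\xi$ and $t$. Your handling of the $\rho\to\infty$ limit for $\tau\in(0,1)$ is the same calculation as the paper's: split at $\xi=0$, discard the negative range because the Gaussian in the numerator kills it, and use the tail expansion \eqref{erfc asymp} on the positive range; the substitutions $\xi=\rho\sigma$ and then $\sigma=(1-\tau)\xi$ reproduce the paper's change of variables up to relabelling, and the bookkeeping with $c(\tau)(1-\tau)\to1$ is correct.

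One point worth flagging: for $\tau=1$, both your argument and the paper's ``trivial'' observation yield
\begin{equation}
\lim_{\rho\to\infty}\widetilde{R}^1(z)=\1_{\{x\le0\}}\int_{-\infty}^0\frac{e^{-\frac12(2x-\xi)^2}}{\int_{-\infty}^0 e^{-\frac12(t-\xi)^2}\,dt}\,d\xi,
\end{equation}
with $2x$ in the numerator, because $c(1)=\rho$ forces $\widetilde{R}^1=R^1$ exactly and \eqref{R hardwall one-sided pre} carries the factor $2x=2\re z$. The displayed formula in \eqref{R hardwall one-sided limit} writes $x-\xi$ instead; you assert that ``dominated convergence then gives the first line of \eqref{R hardwall one-sided limit},'' which is not literally true as written. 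This is not a flaw in your reasoning — the computation itself is fine and matches what the paper's proof implies — but you should not claim it reproduces the stated display verbatim; it reproduces it with $2x$ in place of $x$, and the discrepancy looks like a typo in the corollary statement (or a hidden convention for the soft/hard Ginibre normalisation) rather than something your argument missed.
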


See Figure~\ref{Fig_R onesided} for the graphs of $R^{\tau}(x)$.

We emphasise that in \eqref{R hardwall one-sided limit}, the former limit corresponds to the $1$-point function for the soft/hard edge Ginibre ensemble \cite{forrester2010log,MR3975882}, whereas the latter one corresponds to the $1$-point function for hard edge Ginibre ensemble \cite{seo2020edge}. 
Moreover, the latter limit also appears in the context of truncated unitary ensembles \cite{MR1748745,khoruzhenko2021truncations}. 

Let us briefly explain the rescaling factor $c(\tau)$ in \eqref{R hardwall one-sided}. 
The $1$-point function $\widetilde{R}^\tau$ appears when we rescale the point process as $z_j =\gamma_n(\zeta_j - r_\tau)$, where $\gamma_n$ satisfies
\begin{equation} \label{gmeqn}
\Lap Q(p) \, \gamma_n^2  + \frac{1-\tau}{r_{\tau}} \, \gamma_n =\frac{1}{n}.
\end{equation}
Note that by \eqref{eq msr hard edge}, the constant $\gamma_n$ corresponds to the mean eigenvalue spacing at $r_\tau$. 
Then the rescaling \eqref{R hardwall one-sided} follows from \eqref{gmeqn} since $\gamma_n \asymp c(\tau)/n$.

\begin{figure}[h!]
    	\begin{subfigure}{0.32\textwidth}
		\begin{center}	
			\includegraphics[width=\textwidth]{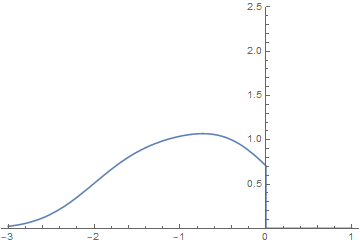}
		\end{center}
		\subcaption{$\tau=1$}
	\end{subfigure}	
	\begin{subfigure}{0.32\textwidth}
		\begin{center}	
			\includegraphics[width=\textwidth]{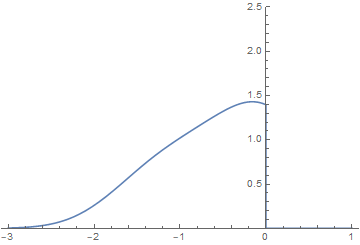}
		\end{center}
		\subcaption{$\tau=\frac56$}
	\end{subfigure}	
	\begin{subfigure}{0.32\textwidth}
		\begin{center}	
			\includegraphics[width=\textwidth]{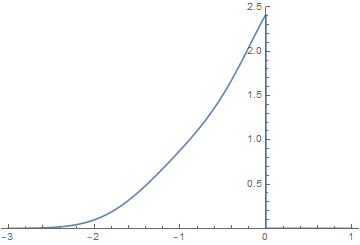}
		\end{center}
		\subcaption{$\tau=\frac{2}{3}$}
	\end{subfigure}	
    \caption{ The graphs of $R^{\tau}(x)$ in \eqref{R hardwall one-sided pre}. Here $\rho=4$. } \label{Fig_R onesided}
\end{figure}

\subsection{Maximal and minimal modulus}

In our final main result, we study the maximal and minimal modulus of almost-circular ensembles associated with the potential \eqref{Q interpolating}.

For the system $\{\zeta_j\}_{j=1}^{n}$ in \eqref{Gibbs} with the potential \eqref{Q interpolating}, we consider its maximal and minimal modulus
\begin{equation} \label{max min zeta}
|\zeta|_n := \max_{1\leq j\leq n} |\zeta_j|, \qquad  |\zeta|_1 := \min_{1\leq j\leq n} |\zeta_j|.
\end{equation}
This notation \eqref{max min zeta} will be used in the sequel. 
To investigate the fluctuation of the maximal (resp., minimal) modulus, we rescale $|\zeta|_n$ (resp., $|\zeta|_1$) near the outer (resp., inner) boundary of the droplet. 

Here, we should distinguish the soft/hard edge case.
Accordingly, we define random variables $\omega_n$ and $u_n$ as follows.

\begin{itemize}
    \item If $\para_1,\para_2 < \infty$, 
    \begin{equation} \label{maxmin soft with para}
\omega_n := a_n(|\zeta|_n - b_n), \qquad u_n := a_n'(b_n' - |\zeta|_1),
\end{equation}
where 
\begin{equation}
\begin{split} \label{an bn an' bn'}
a_n &= \frac{2n}{\rho}\sqrt{\para_2 \, c_n},\qquad b_n = r_1+ \frac{\rho}{2n}\sqrt{\frac{c_n}{\para_2}},
\\
a_n' &=  \frac{2n}{\rho}\sqrt{\para_1 \, c_n'},\qquad b_n' = r_0 -  \frac{\rho}{2n}\sqrt{\frac{c_n'}{\para_1}}. 
\end{split}
\end{equation}
Here 
\begin{equation}
\begin{split} \label{cn cn'}
c_n &= 2\log \frac{n}{\rho} - 2\log\log n - 2\log \big(2\, \Phi^{(\para)}(\frac{\rho}{2})\big),
\\
c_n' &= 2\log \frac{n}{\rho} - 2\log\log n - 2\log \big( 2\, \Phi^{(\para)}(-\frac{\rho}{2})\big). 
\end{split}
\end{equation}

\item If $\para_1=\para_2=\infty$, 
\begin{equation}  \label{maxmin soft/hard}
\omega_n := \frac{c\, n^2}{\rho^2}(|\zeta|_n - r_1),  \qquad  
u_n: = \frac{c'\, n^2}{\rho^2}(r_0 - |\zeta|_1),
\end{equation}
where 
\begin{equation} \label{c c'}
c = 2R^{(\infty)}(\frac{\rho}{4}), \qquad c' = 2R^{(\infty)}(-\frac{\rho}{4}).
\end{equation}
\end{itemize}

Here, $\Phi^{(\para)}$ and $R^{(\infty)}$ are given in \eqref{Phi para} and \eqref{R softhard} respectively.
We remark that the specific choice of parameters \eqref{an bn an' bn'}, \eqref{cn cn'}, and \eqref{c c'} leads to the universal form of the distributions \eqref{Gumbel}, \eqref{exp} below.
We obtain the following theorem.

\begin{thm}\label{Thm_M} \textup{\textbf{(Fluctuations of the maximal and minimal modulus)}}
The followings hold.
\begin{enumerate}[label=(\roman*)]
    \item If $\para_1,\para_2 < \infty$, the random variables $\omega_n$ and $u_n$ in \eqref{maxmin soft with para} converge in distribution to the standard Gumbel distribution, i.e.
    \begin{equation} \label{Gumbel}
\lim_{n\to \infty} \Prob_n(\omega_n \leq x) = \lim_{n\to \infty} \Prob_n(u_n \leq x) = e^{-e^{-x}}, \qquad x\in \R.
    \end{equation}
    \item If $\para_1=\para_2 = \infty$,  the random variables $\omega_n$ and $u_n$ in \eqref{maxmin soft/hard} converge in distribution to the standard exponential distribution, i.e. 
\begin{equation}\label{exp} 
\lim_{n\to \infty} \Prob_n(\omega_n \leq x) = \lim_{n\to \infty} \Prob_n(u_n \leq x) = e^{x}, \qquad x\in \R_{-}.
\end{equation}
\end{enumerate}
\end{thm}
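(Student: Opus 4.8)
The plan is to decouple the radial degrees of freedom, reduce both statements to a one–dimensional extreme–value problem for \emph{independent} random variables, and then carry out a saddle–point analysis.

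Since $Q_n^{(\para)}$ is radially symmetric, Kostlan's theorem applies: the multiset of moduli $\{|\zeta_j|\}_{j=1}^{n}$ has the same law as $\{X_{n,0},\dots,X_{n,n-1}\}$, where the $X_{n,j}$ are independent and $X_{n,j}$ has probability density proportional to $r^{2j+1}e^{-nQ_n^{(\para)}(r)}$ on $\R_+$ (restricted to $[r_0,r_1]$ when $\para_1=\para_2=\infty$). Hence $\Prob_n(|\zeta|_n\le t)=\prod_{j=0}^{n-1}\Prob(X_{n,j}\le t)$ and $\Prob_n(|\zeta|_1\ge s)=\prod_{j=0}^{n-1}\Prob(X_{n,j}\ge s)$. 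Writing $p_j(t):=\Prob(X_{n,j}>t)$ and using $-\log(1-p)=p+O(p^2)$,
\begin{equation*}
-\log\Prob_n(|\zeta|_n\le t)=\sum_{j=0}^{n-1}p_j(t)+O\Big(\sum_{j=0}^{n-1}p_j(t)^2\Big),
\end{equation*}
and the analogous identity for $|\zeta|_1$. One also has $\sum_j p_j(t)=\mathbf{E}_n\#\{j:|\zeta_j|>t\}=\int_{|\zeta|>t}\bfR_n\,dA=\int_t^{\infty}2r\,\bfR_n(r)\,dr$, which is convenient in the soft/hard case. The problem thus reduces to evaluating $\lim_n\sum_j p_j(b_n+x/a_n)$ (and its $u_n$-analogue) and to checking $\sum_j p_j^2\to 0$.

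For the saddle-point analysis, write the density of $X_{n,j}$ as $e^{n\psi_{n,j}(r)}$ up to normalization, with $\psi_{n,j}(r):=\tfrac{2j+1}{n}\log r-Q_n^{(\para)}(r)$. Its interior critical point $\rho_{n,j}$ solves $\rho\,(Q_n^{(\para)})'(\rho)=\tfrac{2j+1}{n}$, so by \eqref{r tau eq}–\eqref{r tau asymp} it equals $r_{\tau}$ with $\tau=\tfrac{2j+1}{2n}$, giving $\rho_{n,j}=1-\tfrac{\rho^2}{4n}\big(1-\tfrac{2j+1}{n}\big)+o(n^{-1})$; in particular the $\rho_{n,j}$ sweep the droplet with step $\rho_{n,j}-\rho_{n,j-1}\sim\tfrac{\rho^2}{2n^2}$, much finer than the width $\asymp n^{-1}$ of $X_{n,j}$. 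From $g_n(1)=0$, $g_n'(1)=1$, $|g_n'''|\le Cn$ and \eqref{rho} one obtains $\psi_{n,j}''(\rho_{n,j})=-(4\Lap Q_n(1))(1+o(1))=-\tfrac{4n}{\rho^2}(1+o(1))$ on the droplet, and $\psi_{n,j}''=-\tfrac{4\para_2 n}{\rho^2}(1+o(1))$ just beyond $r_1$ (resp.\ with $\para_1$ below $r_0$), since $\check Q_n$ is harmonic there so that $\Lap Q_n^{(\para)}=\para_2\Lap Q_n$ outside the droplet. A one-sided Laplace expansion of the ratio $p_j(t)=\big(\int_t^{\infty}r^{2j+1}e^{-nQ_n^{(\para)}(r)}\,dr\big)/\big(\int_0^{\infty}r^{2j+1}e^{-nQ_n^{(\para)}(r)}\,dr\big)$ then shows, uniformly over the relevant indices, that $X_{n,j}$ is within $o(n^{-1})$ of a Gaussian of mean $\rho_{n,j}$ and standard deviation $\asymp n^{-1}$, the variance on the far side of the droplet scaled by $\para_2^{-1}$ (resp.\ $\para_1^{-1}$). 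Since $p_j$ depends on $j$ essentially only through $\rho_{n,j}$, the sum $\sum_j p_j(t)$ is, to leading order, a Riemann sum over the parameter $\rho_{n,j}$ with step $\sim\rho^2/(2n^2)$; evaluating it at $t=b_n+x/a_n$ with $b_n,a_n,c_n$ as in \eqref{an bn an' bn'}–\eqref{cn cn'} and performing the standard computation behind the Gumbel law (the $-2\log\log n$ and $-2\log(2\Phi^{(\para)}(\pm\tfrac{\rho}{2}))$ corrections being exactly what normalizes the tail integral) gives $\sum_j p_j(b_n+x/a_n)\to e^{-x}$, and similarly for $u_n$.

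When $\para_1=\para_2=\infty$ the gas is confined to $[r_0,r_1]$, so $|\zeta|_n\le r_1$ and only the top $O(1)$ indices have $\rho_{n,j}$ within $O(n^{-2})$ of the wall. Here it is cleanest to use the radial-intensity identity: for $t=r_1-s$ with $s\asymp n^{-2}$,
\begin{equation*}
\sum_j p_j(t)=\int_{r_1-s}^{r_1}2r\,\bfR_n(r)\,dr=(1+o(1))\,s\cdot 2\,\bfR_n(r_1),
\end{equation*}
since $\bfR_n$ is essentially constant on an $O(n^{-2})$-window; and by Theorem~\ref{thm:onelimit} with $\para_1=\para_2=\infty$ (cf.\ \eqref{R softhard}) the rescaled $\bfR_n$ converges at the outer wall to $R^{(\infty)}(\tfrac{\rho}{4})$, i.e.\ $2\,\bfR_n(r_1)=\tfrac{c\,n^2}{\rho^2}(1+o(1))$ with $c=2R^{(\infty)}(\tfrac{\rho}{4})$. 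Taking $s=\tfrac{\rho^2|x|}{c\,n^2}$ gives $\sum_j p_j\to|x|$, hence $\Prob_n(\omega_n\le x)\to e^{x}$ for $x<0$; the minimum is identical with $R^{(\infty)}(-\tfrac{\rho}{4})$. Finally, in all cases $\max_j p_j(t)\to 0$ at the relevant scale, so $\sum_j p_j^2\le(\max_j p_j)\sum_j p_j\to 0$, which upgrades the limits of $\sum_j p_j$ to convergence in distribution; the statements for $|\zeta|_1$ follow verbatim after replacing $(r_1,\para_2,c_n,\Phi^{(\para)}(\tfrac{\rho}{2}),R^{(\infty)}(\tfrac{\rho}{4}))$ by $(r_0,\para_1,c_n',\Phi^{(\para)}(-\tfrac{\rho}{2}),R^{(\infty)}(-\tfrac{\rho}{4}))$ and noting that the relevant indices are now $j$ near $0$.

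The main obstacle is the uniform control of the saddle-point expansion for the $O(n/\rho)$ indices whose critical point $\rho_{n,j}$ lies within $O(n^{-1})$ of the droplet boundary: there the naive interior Laplace expansion degenerates, and one must match the interior Gaussian across the edge to the $\para$-modified exterior tail (or, when $\para=\infty$, to the hard wall) through a genuinely two-sided saddle analysis, with errors controlled uniformly in $j$ and in $x$ on compact sets, and with the constants tracked precisely enough to assemble into \eqref{an bn an' bn'}–\eqref{c c'}. This is exactly where the hypotheses $g_n\in C^3$ and $|g_n'''|\le Cn$ near the unit circle are used, to bound the third-order remainder in the expansions.
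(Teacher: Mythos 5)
Your proposal takes essentially the same route as the paper: the radial/Kostlan product formula for the law of $|\zeta|_n$, a Laplace (saddle-point) expansion of the orthogonal norms near $r_{\tau(j)}$ together with $\erfc$ asymptotics, a Riemann-sum approximation of $\sum_j p_j(b_n + x/a_n)$ to get $e^{-x}$, and, for $\para_1=\para_2=\infty$, the identification of $\sum_j p_j$ with $\int 2r\,\bfR_n^{(\infty)}\,dr$ and the convergence of the rescaled $1$-point function from Theorem~\ref{thm:onelimit}. Your explicit remark that $\sum_j p_j^2\to 0$ is needed to upgrade $\sum_j p_j\to e^{-x}$ to $\log\Prob_n(\omega_n\le x)\to -e^{-x}$ makes precise a step the paper treats as immediate after Lemma~\ref{S_N}.
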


This theorem asserts that the fluctuations of maximal and minimal modulus are same as those previously obtained for the usual random normal matrices \cite{chafai2014note,MR1148410, MR1986426, MR4179777} (and also planar symplectic ensembles \cite{MR1986426, Dubach2020}).
In other words, the distributions \eqref{Gumbel} and \eqref{exp} universally appear also in the context of almost-circular ensembles.

\subsection{Remarks on main theorems and glimpses of Ward's equations} \label{Subsec_Ward}

We end this section by giving some general remarks and briefly introducing alternative approach based on Ward's equations.  

The overall strategy for the proofs of our main theorems are as follows. 
Using the exact solvability of the model, we express the correlation kernel in terms of the orthonormal polynomials with respect to the weighted Lebesgue measure $e^{-nQ_n}\,dA,$ see \eqref{bfKn ONP}.
Since the external potential $Q_n$ is radially symmetric, the orthonormal polynomial is a monomial.
The asymptotics of the leading coefficient of such a polynomial can be obtained by virtue of Laplace methods, which allows us to derive large-$n$ limits of the correlation kernel by a proper Riemann sum approximation. 

Our proofs for the scaling limits (Theorems~\ref{thm:onelimit} and ~\ref{thm:hardwall two-sided}) are completely different with those used in \cite{AB21} for Theorem~\ref{thm:onelimit_fH}. 
In general, it can be shown that the limiting $1$-point function $R$ satisfies certain partial integro-differential equation called \emph{Ward's equation}. 
For the free boundary condition, it is of the form 
\begin{equation} \label{Ward free}
\pa C(z)=R(z)-1-\Lap \log R(z), \qquad C(z):=\frac{1}{R(z)}\int_\C \frac{|K(z,w)|^2}{z-w}\,dA(w).
\end{equation}
An important feature of this equation is that it \emph{does not} depend on the choice of the external potential.

In \cite{AB21}, the notion of ``cross-section convergence'' for general bandlimited point processes was introduced. 
Combining this property with the characterisation of translation invariant solutions to Ward's equation (obtained in \cite{MR4030288}), the authors derived various scaling limits of almost-Hermitian type ensembles under some natural geometric conditions notably the translation invariance of scaling limits.
In particular, since such conditions trivially hold for almost-circular ensembles, this immediately leads to Theorem~\ref{thm:onelimit_fH}.
(We refer the reader to \cite[Remark 2.9]{akemann2021scaling} for a summary of the approach using Ward's equation in the study of universality.) 

We briefly present Ward's equations in our various cases ($R^{(\para)}$ in \eqref{R int}; $R^{\tau_1,\tau_2}$ in \eqref{R hardwall two-sided}; $R^\tau$ in \eqref{R hardwall one-sided pre}; $\widetilde{R}^\tau$ in \eqref{R hardwall one-sided}) and compare them with those in previous literature.  
In the spirit of \cite{AB21,MR4030288,MR3975882}, this may be used to study the existence of further universality classes beyond radially symmetric ensembles.
We also refer to \cite{MR1487983} and \cite{akemann2021scaling} for implementations of Ward's equations in the study of Hermitian random matrices and planar symplectic ensembles respectively.
We remark that for a general class of external potentials, the local bulk and edge universality of random normal matrices were obtained in \cite{MR2817648} and \cite{hedenmalm2017planar} respectively. 
(See also a recent work \cite{ameur2021szego} which establishes Szeg\H o type asysmptotics for the correlation kernel.)

The derivation of Ward's equations follows from the standard method found for instance in \cite{MR3975882,MR3342661} and we skip the proof. 
The main idea is the reparametrisation invariance of the partition function $Z_n$.

In each situation, we write $C^{(\para)}$, $C^{\tau_1,\tau_2}$, $C^\tau$ and $\widetilde{C}^\tau$ for the associated Cauchy transforms of the form \eqref{Ward free}.

\begin{itemize}
    \item \textit{Ward's equation for soft edge with a confinement parameter.} The $1$-point function $R^{(\para)}$ in \eqref{R int} satisfies 
\begin{equation} \label{Ward para}
\bp C^{(\para)}(z)=R^{(\para)}(z)-1-\Lap \log R^{(\para)}(z)+(1-\para_1) \1_{\re z<-\frac{\rho}{4}}+(1-\para_2) \1_{\re z>\frac{\rho}{4}}.
\end{equation}
We remark that if $\para_1=1$ or $\para_2=1$, the equation \eqref{Ward para} corresponds to the one presented in \cite[Theorem 1.1]{MR4169375}.
Furthermore, if $\para_1=\para_2=1$, then \eqref{Ward para} corresponds to \eqref{Ward free}. 
\smallskip 
   \item \textit{Ward's equation for two-sided hard edge.} The $1$-point function $R^{\tau_1,\tau_2}$ in \eqref{R hardwall two-sided} satisfies
\begin{equation} \label{Ward two sided hard}
\bp C^{\tau_1,\tau_2}(z)=R^{\tau_1,\tau_2}(z)-1-\Lap \log R^{\tau_1,\tau_2}(z), \qquad \frac{\rho}{4}(2\tau_1-1) \le \re z \le \frac{\rho}{4}(2\tau_2-1).
\end{equation}
Such form of Ward's equation was extensively studied in \cite{MR4030288}.
Notice that if $\tau_1 \to -\infty$ and $\tau_2 \to \infty$, then \eqref{Ward two sided hard} corresponds to \eqref{Ward free}. 
   \smallskip 
   \item \textit{Ward's equation for one-sided hard edge.} The $1$-point function $R^\tau$ in \eqref{R hardwall one-sided pre} satisfies 
\begin{equation} \label{Ward one sided hard}
\bp C^\tau(z)=R^\tau(z)-1-\Lap \log R^\tau(z), \qquad \re z \le 0.
\end{equation}
The equation \eqref{Ward one sided hard} corresponds to the one presented in \cite[Subsection 7.1]{MR3975882}. 
By the change of variable, it is easy to see that the $1$-point function $\widetilde{R}^\tau$ in \eqref{R hardwall one-sided} satisfies 
\begin{equation} \label{Ward one sided hard res}
\bp \widetilde{C}^\tau(z)=\widetilde{R}^\tau(z)-( \frac{ c(\tau) }{ \rho} )^2-\Lap \log \widetilde{R}^\tau(z), \qquad \re z \le 0.
\end{equation}
Note that if $0<\tau<1$, we have 
$c(\tau)/\rho \to 0$ as $\rho \to \infty$. 
Therefore by taking the limit $\rho \to \infty$ of \eqref{Ward one sided hard res}, we arrive at 
\begin{equation} \label{Ward hardW}
\bp \widetilde{C}^\tau(z)=\widetilde{R}^\tau(z)-\Lap \log \widetilde{R}^\tau(z), \qquad \re z \le 0, \qquad (\rho=\infty). 
\end{equation}
This form \eqref{Ward hardW} of Ward's equation was used in \cite{seo2020edge} to study scaling limits of hard edge ensembles. 
\end{itemize}

 The rest of this note is structured as follows.
In Section~\ref{Section_Scaling limits}, we derive various scaling limits (Theorems~\ref{thm:onelimit}, ~\ref{thm:hardwall two-sided} and Corollary~\ref{cor:hardwall one-sided}).
Section~\ref{Section_modulus} is devoted to the study of maximal and minimal modulus of the ensemble, Theorem~\ref{Thm_M}.

\section{Scaling limits of almost-circular ensembles}\label{Section_Scaling limits}

In this section, we show Theorems~\ref{thm:onelimit}, ~\ref{thm:hardwall two-sided} and Corollary~\ref{cor:hardwall one-sided}. 

\subsection{Soft edge with a confinement parameter}

In this subsection, we prove Theorem~\ref{thm:onelimit}.

Recall that for each real $\tau$, $r_\tau=r_{n,\tau}$ denotes a unique constant satisfying \eqref{r tau eq}. 
The following lemma gives the asymptotic expansion of $r_{\tau}$.

\begin{lem}  \label{lem_r tau}
For each fixed $\tau$, we have
\begin{equation}
r_{\tau} = 1 - \frac{\rho^2}{4n}(1-2\tau) + o(n^{-1}),\qquad n\to \infty.
\end{equation}
\end{lem}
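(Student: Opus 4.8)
The plan is to derive the expansion of $r_\tau$ directly from the defining equation $r_\tau\, g_n'(r_\tau) = 2\tau$ in \eqref{r tau eq}, using the normalizations $g_n(1)=0$, $g_n'(1)=1$ together with the relation \eqref{rho} between $\Lap Q_n(1)$ and $\rho$, and the uniform third-derivative bound $|g_n'''| \le Cn$ near the unit circle. First I would record the elementary identity that for a radially symmetric potential $Q_n(\zeta)=g_n(r)$ one has $\Lap Q_n = \frac14\big(g_n'' + \frac1r g_n'\big)$, so that at $r=1$, using $g_n'(1)=1$, we get $g_n''(1) = 4\Lap Q_n(1) - 1$. By \eqref{rho}, $\Lap Q_n(1) = \tfrac{n}{\rho^2}(1+o(1))$, hence $g_n''(1) = \tfrac{4n}{\rho^2}(1+o(1))$; in particular $g_n''(1) \to \infty$, which is what forces $r_\tau$ to be within $O(1/n)$ of $1$.

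Next I would write $h_n(r) := r\,g_n'(r)$, so that $h_n(1) = 1$ and $h_n'(r) = g_n'(r) + r g_n''(r)$, giving $h_n'(1) = 1 + g_n''(1) = \tfrac{4n}{\rho^2}(1+o(1))$. The equation to solve is $h_n(r_\tau) = 2\tau$. Taylor-expanding $h_n$ around $r=1$ with the Lagrange remainder, $h_n(r) = 1 + h_n'(1)(r-1) + \tfrac12 h_n''(\xi)(r-1)^2$ for some $\xi$ between $1$ and $r$, where $h_n''(r) = 2g_n''(r) + r g_n'''(r)$. The hypotheses give $|g_n'''| \le Cn$ and, from $\Lap Q_n$ being bounded near $1$ in the relevant sense together with $C^3$-smoothness, $g_n''$ is $O(n)$ uniformly on a fixed neighborhood of $1$; hence $h_n''(\xi) = O(n)$ on that neighborhood. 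Setting $r_\tau - 1 = \delta_n$, the equation becomes $h_n'(1)\,\delta_n + O(n\,\delta_n^2) = 2\tau - 1$. Since $h_n'(1) \asymp n$, a standard fixed-point / bootstrap argument gives first $\delta_n = O(1/n)$, and then feeding this back, $\delta_n = \dfrac{2\tau-1}{h_n'(1)}\big(1+o(1)\big) = \dfrac{(2\tau-1)\rho^2}{4n}\big(1+o(1)\big)$, i.e. $r_\tau = 1 - \dfrac{\rho^2}{4n}(1-2\tau) + o(n^{-1})$, as claimed. Existence and uniqueness of $r_\tau$ for large $n$ follows since $h_n$ is strictly increasing near $1$ (because $h_n'(1)\to\infty$, so $h_n' > 0$ on a fixed neighborhood for large $n$), so $h_n$ is a bijection from that neighborhood onto an interval containing $2\tau$.

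The main obstacle, such as it is, is bookkeeping the error terms uniformly in $n$: one must be careful that the $o(n^{-1})$ (rather than merely $O(n^{-1})$) in the conclusion really follows, which requires that $h_n'(1) = \tfrac{4n}{\rho^2}(1+o(1))$ with a genuine $o(1)$ — this is exactly the content of \eqref{rho} — and that the quadratic remainder is $o(1)$ relative to the linear term after substituting $\delta_n = O(1/n)$, i.e. $n\cdot (1/n)^2 = 1/n = o(1)\cdot(1/n)\cdot n$... more precisely $O(n\delta_n^2) = O(1/n)$ must be shown to be $o(1)$ times the left side scaled appropriately; since $h_n'(1)\delta_n \asymp (2\tau-1)$ is order $1$ while $O(n\delta_n^2) = O(1/n) = o(1)$, the remainder is negligible and the expansion $\delta_n = (2\tau-1)/h_n'(1) + O(1/n^2)$ holds, which upon inserting $h_n'(1) = \tfrac{4n}{\rho^2}(1+o(1))$ yields the stated $o(n^{-1})$. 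This also transparently specializes to \eqref{r0 r1} when $\tau=0$ and $\tau=1$.
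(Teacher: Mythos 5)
Your proof is correct and follows essentially the same route as the paper: observe that $r_{1/2}=1$, Taylor-expand $h_n(r)=r\,g_n'(r)$ about $r=1$ using $h_n'(1)=1+g_n''(1)=4\Lap Q_n(1)=\tfrac{4n}{\rho^2}(1+o(1))$, and solve for $r_\tau-1$. The paper's proof is terser (it suppresses the quadratic-remainder bookkeeping you carry out via $|g_n'''|\le Cn$ and the bootstrap $\delta_n=O(1/n)$), but the underlying argument is identical.
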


\begin{proof} 
Since $g_n(1)=0$ and $g_n'(1) = 1$, we have $r_{\tau}=1$ when $\tau=\frac{1}{2}$. By Taylor series expansion, it follows from \eqref{rho} that 
\begin{align*}
    1-2\tau &= g_n'(1) - r_{\tau} g_n'(r_{\tau}) = \frac{d}{dr}(rg'(r))|_{r=1} \cdot (1 -r_{\tau}) \cdot (1+o(1)) \\
    &=4\Lap Q_n(1)\cdot (1-r_{\tau}) \cdot (1+o(1)) = \frac{4 n}{\rho^2}\cdot (1-r_{\tau})\cdot (1+o(1)),
\end{align*}
which completes the proof.
\end{proof}

Let us write 
\begin{equation} \label{delta n}
\delta_n = n^{-1}\log n .
\end{equation}
By \eqref{Q check radial}, it is easy to observe that for $r_0 - \delta_n < r <r_0$,
\begin{equation}\label{exp:atr0}
(Q_n - \check{Q}_n)(r) = 2\Lap Q_n(r_0)(r-r_0)^2 + O(n|r-r_0|^3) = \frac{2n}{\rho^2}(r-r_0)^2 \cdot (1+o(1)) + O(n\delta_n^3)
\end{equation}
as $n\to \infty$.
Similarly, for $r_1<r < r_1 + \delta_n$,
\begin{equation}\label{exp:atr1}
(Q_n - \check{Q}_n)(r) = 2\Lap Q_n(r_1)(r-r_1)^2 + O(n|r-r_1|^3) = \frac{2n}{\rho^2}(r-r_1)^2 \cdot (1+o(1)) + O(n\delta_n^3).
\end{equation}

We first prove some estimates for the $L^2$-norm of monomials with respect to the measure $e^{-nQ_n^{(\para)}}\,dA$. 

For each $j$ with $0\leq j\leq n-1$, we define the functions $v_{n,j}$ by
\begin{equation}\label{v_nj}
v_{n,j}(r) = g_n(r) - 2\tau(j) \log r, \qquad \tau(j) := j/n.
\end{equation}
The function $v_{n,j}$ has a unique critical point at $r_{\tau(j)}$. We obtain the expansion 
\begin{align} \label{exp_v}
\begin{split}
v_{n,j}(r) 
&= v_{n,j}(r_{\tau(j)}) + 2\Lap Q(r_{\tau(j)})(r-r_{\tau(j)})^2 + O(n\delta_n^3)\\
&= v_{n,j}(r_{\tau(j)}) + \frac{2n}{\rho^2}(r-r_{\tau(j)})^2\cdot(1+o(1)) + O(n\delta_n^3)
\end{split}
\end{align}
for all $r$ with $|r-r_{\tau(j)}|\leq \delta_n$. 
Recall that $\Phi^{(\para)}$ is given by \eqref{Phi para}.

We write 
\begin{equation} \label{orthogonal norm para}
\|\zeta^j\|_{nQ_n^{(\para)}}^2 
:= \int_{\C} |\zeta^j|^2 e^{-nQ_n^{(\para)}(\zeta)} \, dA (\zeta)
\end{equation}
for the orthogonal norm. 
Then we obtain the following asymptotic behaviour of \eqref{orthogonal norm para}.

\begin{lem} \label{lem:L2norm}
For each $j$ with $0\leq j \leq n$, we have 
\begin{equation}\label{jnorm}
\|\zeta^j\|_{nQ_n^{(\para)}}^2 = e^{-nv_{n,j}(r_{\tau(j)})}\frac{\rho}{n}\, \Phi^{(\para)}\Big(\rho(\tau(j) - \frac{1}{2})\Big) \cdot (1+o(1)),\end{equation}
where $\tau(j) = j/n$ and $o(1)\to 0$ uniformly in $j$.
\end{lem}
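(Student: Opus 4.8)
Looking at this, I need to prove Lemma~\ref{lem:L2norm}, which is an asymptotic evaluation of the weighted $L^2$-norm of monomials $\zeta^j$ under the potential $Q_n^{(\para)}$.

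The plan is to reduce the two-dimensional integral \eqref{orthogonal norm para} to a one-dimensional Laplace-type integral using the radial symmetry, and then carry out a Laplace (saddle point) analysis localized near the critical radius $r_{\tau(j)}$. Let me sketch the steps.

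First I would write the integral in polar coordinates: since $Q_n^{(\para)}$ is radially symmetric, $\|\zeta^j\|_{nQ_n^{(\para)}}^2 = 2\int_0^\infty r^{2j+1} e^{-nQ_n^{(\para)}(r)}\,dr$. On the droplet $r_0 \le r \le r_1$ we have $Q_n^{(\para)} = Q_n = g_n$, so the integrand is $\exp(-n(g_n(r) - 2\tau(j)\log r)) = e^{-n v_{n,j}(r)}$ in that range. The function $v_{n,j}$ has its unique critical point at $r_{\tau(j)}$, which by Lemma~\ref{lem_r tau} lies at $1 - \frac{\rho^2}{4n}(1-2\tau(j)) + o(n^{-1})$, hence inside (or near) the droplet for $0 \le j \le n$. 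Using the expansion \eqref{exp_v}, on the window $|r - r_{\tau(j)}| \le \delta_n$ the exponent is a Gaussian $v_{n,j}(r_{\tau(j)}) + \frac{2n}{\rho^2}(r - r_{\tau(j)})^2(1+o(1))$, so substituting $r = r_{\tau(j)} + \frac{\rho}{2n} t$ (so that the mesoscopic variable $t$ ranges over $|t| \le \frac{2n}{\rho}\delta_n = \frac{2}{\rho}\log n \to \infty$) turns the local contribution into $\frac{\rho}{2n} e^{-n v_{n,j}(r_{\tau(j)})} \int e^{-\frac{1}{2}t^2(1+o(1))}\,dt \cdot (1+o(1))$, picking up the factor $r^{2j+1} \sim r_{\tau(j)}^{2j+1}$ which I absorb into the exponential normalization (note $r_{\tau(j)}^{2j+1} = e^{(2j+1)\log r_{\tau(j)}}$ contributes to $e^{-n v_{n,j}(r_{\tau(j)})}$ up to $1+o(1)$).

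The more delicate point is handling the regions outside the droplet and outside the window. Outside the window $|r - r_{\tau(j)}| > \delta_n$ but still inside the droplet, strict subharmonicity of $Q_n$ near the unit circle gives a lower bound on $v_{n,j}(r) - v_{n,j}(r_{\tau(j)}) \gtrsim n\delta_n^2 = n^{-1}(\log n)^2$, so that part is exponentially ($n$-power) smaller. Outside the droplet, for $r < r_0$ we have $Q_n^{(\para)} = \para_1 Q_n + (1-\para_1)\check Q_n$ and for $r > r_1$ we have $Q_n^{(\para)} = \para_2 Q_n + (1-\para_2)\check Q_n$; using \eqref{exp:atr0}, \eqref{exp:atr1} together with the definition \eqref{I para} of $I^{(\para)}$, I must show that the contribution of a window of width $\sim n^{-1}$ just outside $r_0$ and just outside $r_1$ contributes precisely the $I^{(\para)}$-modified Gaussian, so that adding all three ranges assembles exactly $\int_{-\infty}^\infty e^{-\frac{1}{2}(t-\xi)^2 + I^{(\para)}(t)}\,dt = \Phi^{(\para)}(\xi)$ with $\xi = \rho(\tau(j) - \frac12)$, which encodes the displacement of the critical point from the midpoint. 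The far tails (well outside $r_0$ by more than $\delta_n$, or well outside $r_1$) are controlled by the growth assumption \eqref{uniform} and are negligible. The main obstacle is bookkeeping the matching at the edges $r_0$ and $r_1$: one must verify that the critical point $r_{\tau(j)}$, the edge points $r_0, r_1$, and the rescaling $r = r_{\tau(j)} + \frac{\rho}{2n}t$ are mutually consistent so that the piecewise-quadratic exponent in the $t$-variable is exactly $\frac12(t - \xi)^2 - I^{(\para)}(t)$ up to $o(1)$, uniformly in $j$; the parameters $(\para_1, \para_2)$ possibly being $+\infty$ forces care since then the corresponding outer window collapses to the droplet edge.

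Finally I would collect the estimates: the window contribution gives the claimed main term $e^{-n v_{n,j}(r_{\tau(j)})} \frac{\rho}{n} \Phi^{(\para)}(\rho(\tau(j) - \frac12))(1+o(1))$, and all remaining contributions are of lower order, with all error terms uniform in $j \in \{0, 1, \dots, n\}$ because the expansions \eqref{exp_v}, \eqref{exp:atr0}, \eqref{exp:atr1} hold uniformly and $\tau(j) - \frac12$ ranges over a compact set. The uniformity in $j$ is essential for later assembling the correlation kernel as a Riemann sum, so I would be careful to phrase every $o(1)$ and $O(\cdot)$ estimate with constants independent of $j$.
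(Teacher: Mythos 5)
Your proposal is correct and takes essentially the same route as the paper: reduce to a one-dimensional radial Laplace integral, split at $r_0$ and $r_1$ into three pieces, and apply the Gaussian (saddle-point) approximation around the critical radius $r_{\tau(j)}$ on each piece, assembling the three contributions into $\Phi^{(\para)}(\rho(\tau(j)-\tfrac12))$. The only cosmetic difference is that you rescale around $r_{\tau(j)}$ rather than around $1$ as the paper does, which is an equivalent parametrization up to a translation in the $t$-variable.
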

\begin{proof}
In order to evaluate \eqref{orthogonal norm para}, we split the integral into the sum of three integrals:  
\begin{align*}
\|\zeta^j\|_{nQ_n^{(\para)}}^2 
& = \int_{S_n} |\zeta^j|^2 e^{-nQ_n(\zeta)} \, dA(\zeta) 
\\
&\quad + \int_{|\zeta|<r_{0}} |\zeta^j|^2 e^{-nQ_n^{(\para)}(\zeta)}\, dA(\zeta) + \int_{|\zeta|>r_{1}} |\zeta^j|^2 e^{-nQ_n^{(\para)}(\zeta)} \, dA(\zeta),
\end{align*}
where $S_n$ is given by \eqref{S Qn}. 
By \eqref{exp_v}, the standard Laplace's method gives rise to 
\begin{align*}
\int_{S_n}|\zeta^j|^2 e^{-nQ_n(\zeta)}\, dA(\zeta) 
&= \int_{r_{0}}^{r_{1}} e^{-nv_{n,j}(r)}\,2 r\, dr 
\\
&= e^{-nv_{n,j}(r_{\tau(j)})} \!\!\int_{r_{0}}^{r_{1}} e^{-\frac{2n^2}{\rho^2}(r-r_{\tau(j)})^2} \,2\, dr \cdot (1+o(1))\\
&= e^{-nv_{n,j}(r_{\tau(j)})} \!\! \int_{-\frac{\rho}{2}}^{\frac{\rho}{2}} e^{-\frac{1}{2} (t-\rho(\tau(j) - \frac{1}{2}))^2} \frac{\rho}{n} \, dt \cdot (1+o(1)),
\end{align*}
where we use Lemma \ref{lem_r tau} and make the change of variable $t = \frac{2n}{\rho}(r-1)$. 
In the same way, we obtain from the asymptotic expansions \eqref{exp:atr0}, \eqref{exp:atr1}, and \eqref{exp_v} that  
\begin{align*}
\int_{|\zeta|<r_{0}} \!\!\!|\zeta^j|^2 e^{-nQ_n^{(\para)}(\zeta)} \, dA(\zeta) 
& = \int_{0}^{r_{0}} e^{-n(v_{n,j}(r)-(1-\para_1)(Q_n-\check{Q}_n)(r))} \,2r\, dr \\
& = e^{-nv_{n,j}(r_{\tau(j)})} \!\!\int_{r_0 -\delta_n}^{r_{0}} e^{-\frac{2n^2}{\rho^2} ((r-r_{\tau(j)})^2 - (1-\para_1)(r-r_{0})^2) } \, 2 \, dr \cdot (1+o(1))\\
& = e^{-nv_{n,j}(r_{\tau(j)})} \!\!\int_{-\infty}^{-\frac{\rho}{2}}\!e^{-\frac{1}{2} ((t-\rho(\tau(j)-\frac{1}{2}))^2 - (1-\para_1)(t+\frac{\rho}{2})^2) } \frac{\rho}{n} \, dt \cdot (1+o(1))
\end{align*}
and
\begin{align*}
\int_{|\zeta|>r_{1}} \!\!\!|\zeta^j|^2 e^{-nQ_n^{(\para)}(\zeta)}\, dA(\zeta) 
& = \int_{r_{1}}^{\infty} e^{-n(v_{n,j}(r)-(1-\para_2)(Q_n-\check{Q}_n)(r))} \,2r \,  dr \\
& = e^{-nv_{n,j}(r_{\tau(j)})} \!\!\int_{r_{1}}^{r_1+\delta_n} e^{-\frac{2n^2}{\rho^2} ((r-r_{\tau(j)})^2 - (1-\para_2)(r-r_{1})^2)  } \, 2 \, dr \cdot (1+o(1))\\
& = e^{-nv_{n,j}(r_{\tau(j)})} \!\!\int_{\frac{\rho}{2}}^{\infty}\!e^{-\frac{1}{2} ((t-\rho(\tau(j)-\frac{1}{2}))^2 - (1-\para_2)(t-\frac{\rho}{2})^2)  } \frac{\rho}{n} \, dt \cdot (1+o(1)).
\end{align*}
Combining all of the above, we conclude \eqref{jnorm}.
\end{proof}

We now prove Theorem~\ref{thm:onelimit}.

\begin{proof}[Proof of Theorem~\ref{thm:onelimit}]
By \eqref{bfKn det}, \eqref{bfKn ONP}, and \eqref{R_n}, the $1$-point function $R_n^{(\para)}$ rescaled at $\point_n=1$ is written as
\begin{equation}\label{Rn para}
R_n^{(\para)}(z) = \frac{1}{n\Lap Q_n(1)} \sum_{j=0}^{n-1} \frac{|\zeta^j|^2}{\|\zeta^j\|_{nQ_n^{(\para)}}^2} e^{-nQ_n^{(\para)}(\zeta)}, \qquad \zeta = 1 + \frac{z}{\sqrt{n\Lap Q(1)}}.
\end{equation}
We shall analyse the asymptotic behaviour of \eqref{Rn para}.

Fix a compact subset $\calK $ in $\C$. For $\zeta = 1 + z/\sqrt{n\Lap Q_n(1)}$ with $r_{0} \leq |\zeta|\leq r_{1}$ and $z \in \calK$, it follows from the expansion \eqref{exp_v} that  
\begin{align*}
Q_n^{(\para)}(\zeta) - 2\tau(j) \log | \zeta| 
&= v_{n,j}(r_{\tau(j)}) + \frac{2n}{\rho^2} \Big(1 + \frac{x}{ \sqrt{n\Lap Q_n(1)} } - r_{\tau(j)}\Big)^2 \cdot (1+o(1)) 
\\
&= v_{n,j}(r_{\tau(j)}) + \frac{1}{2n} \Big( 2x - \rho(\tau(j) - \frac{1}{2}) \Big)^2 \cdot (1+o(1)),
\end{align*}
where $\tau(j)=j/n$ and $x = \re z$. 
Similarly, for $|\zeta| < r_{0}$ and $z \in \calK$, we have 
\begin{align*} 
Q_n^{(\para)}(\zeta) - 2\tau(j) \log | \zeta|  
&= v_{n,j}(\zeta) - (1-\para_1)(Q-\check{Q})(\zeta)
\\
&= v_{n,j}(r_{\tau(j)}) + \frac{1}{2n}\Big( (2x - \rho(\tau(j) - \frac{1}{2}))^2 - (1-\para_1)(2x+ \frac{\rho}{2})^2 \Big) \cdot (1+o(1)).
\end{align*}
Finally, for $|\zeta|>r_{1}$ and $z\in \calK$, we have
\begin{align*} 
Q_n^{(\para)}(\zeta) - 2\tau(j) \log | \zeta| 
&= v_{n,j}(\zeta) - (1-\para_2)(Q-\check{Q})(\zeta)
\\
&= v_{n,j}(r_{\tau(j)}) + \frac{1}{2n} \Big( (2x - \rho(\tau(j) - \frac{1}{2}))^2 - (1-\para_2)(2x - \frac{\rho}{2})^2 \Big) \cdot (1+ o(1)).
\end{align*}
Here, the $o(1)$-terms are bounded uniformly for all $j$ with $0\leq j\leq n-1$ and all $z \in \calK$. 

Combining all of the above with Lemma \ref{lem:L2norm}, we obtain the following Riemann sum approximation of \eqref{Rn para}: 
\begin{align*}
R_n^{(\para)}(z) &= \sum_{j=0}^{n-1} \frac{\rho \, e^{-\frac{1}{2}(2x - \rho(\tau(j) -\frac{1}{2}))^2 }}{n\Phi^{(\para)}(\rho(\tau(j)-\frac{1}{2}))} e^{\frac{1-\para_1}{2}(2x + \frac{\rho}{2})^2\cdot \1_{\{x < -\frac{\rho}{4}\}} + \frac{1-\para_2}{2}(2x - \frac{\rho}{2})^2 \cdot \1_{\{x > \frac{\rho}{4}\}}} \cdot (1+o(1))\\
& = \int_{-\frac{\rho}{2}}^{\frac{\rho}{2}} \frac{e^{-\frac{1}{2}(2x - \xi)^2}}{\Phi^{(\para)}(\xi)} d\xi \cdot 
e^{\frac{1-\para_1}{2}(2x + \frac{\rho}{2})^2\cdot \1_{\{x < -\frac{\rho}{4}\}} + \frac{1-\para_2}{2}(2x - \frac{\rho}{2})^2 \cdot \1_{\{x > \frac{\rho}{4}\}}} \cdot (1+o(1)),
\end{align*}
where $o(1)\to 0$ uniformly for $z \in \calK$ as $n\to \infty$.
This completes the proof. 
\end{proof}

\subsection{Hard edge case} \label{Subsec_limit hardwall}

In this subsection, we prove Theorem~\ref{thm:hardwall two-sided}.
The strategy is similar to the one in the previous subsection.

\begin{proof}[Proof of Theorem~\ref{thm:hardwall two-sided}]
Again we need to analyse the orthogonal norm
\begin{equation} \label{orthogonal norm hard edge cuts}
\|\zeta^j\|^2_{n Q_n^{\tau_1,\tau_2}} := \int_{S_{\tau_2}\setminus S_{\tau_1}} |\zeta|^{2j} e^{-nQ_n(\zeta)} \, dA(\zeta) = \int_{r_{\tau_1}}^{r_{\tau_2}} e^{-nv_{n,j}(r)} \,2r\, dr 
\end{equation}
associated with the potential \eqref{Q hard edge cuts}.
Here $v_{n,j}(r) = g_n(r) - \frac{2j}{n}\log r$, cf. \eqref{Q radially symmetric}.  
By the Taylor series expansion, with $\tau(j)=j/n$, we have
$$
v_{n,j}(r) = v_{n,j}(r_{\tau(j)}) + \frac{2n}{\rho^2}(r-r_{\tau(j)})^2 \cdot (1+o(1)) + O(n|r-r_{\tau(j)}|^3).
$$
As in Lemma \ref{jnorm}, the $L^2$-norm \eqref{orthogonal norm hard edge cuts} is approximated by
\begin{align} \label{orthogonal norm hard edge cuts asym}
\begin{split}
\|\zeta^j\|^2_{nQ_n^{\tau_1,\tau_2}} 
&= e^{-nv_{n,j}(r_{\tau(j)})}\int_{r_{\tau_1}}^{r_{\tau_2}} e^{-\frac{2n^2}{\rho^2}(r-r_{\tau(j)})^2} \,2\,dr \cdot(1+o(1))\\
&= e^{-nv_{n,j}(r_{\tau(j)})}\frac{\rho}{n} \int_{\frac{\rho}{2}(2\tau_1-1)}^{\frac{\rho}{2}(2\tau_2-1)} e^{-\frac{1}{2}(t-\frac{\rho}{2}(2\tau(j)-1))^2} \, dt \cdot(1+o(1)).
\end{split}
\end{align}

Note that by \eqref{bfKn det}, \eqref{bfKn ONP}, and \eqref{R_n}, the $1$-point function $R_n^{\tau_1,\tau_2}$ associated with the hard edge potential \eqref{Q hard edge cuts} is written as 
\begin{equation} \label{Rn hard edge cuts}
R_n^{\tau_1,\tau_2}(z)  = \frac{1}{n\Lap Q(1)}\sum_{j=0}^{n-1}\frac{|\zeta^j|^2}{\|\zeta^j\|^2_{nQ_n^{\tau_1,\tau_2}}} e^{-nQ_n(\zeta)} \1_{S_{\tau_2}\setminus S_{\tau_1}}(\zeta) .
\end{equation}
Then by \eqref{orthogonal norm hard edge cuts asym}, the right-hand side of \eqref{Rn hard edge cuts} is approximated as 
\begin{align*}
R_n^{\tau_1,\tau_2}(z) 
& = \frac{\rho}{n}\1_{\{\frac{\rho}{4}(2\tau_1-1)\leq x \leq \frac{\rho}{4}(2\tau_2-1)\}}
\sum_{j=0}^{n-1} \frac{e^{-\frac{2n^2}{\rho^2}(1+\frac{\rho}{n}x-r_{\tau(j)})^2}}{\int_{\frac{\rho}{2}(2\tau_1-1)}^{\frac{\rho}{2}(2\tau_2-1)} e^{-\frac{1}{2}(t - \frac{\rho}{2}(2\tau(j)-1))^2} \, dt  } \cdot(1+o(1))
\\
&= \frac{\rho}{n}\1_{\{\frac{\rho}{4}(2\tau_1-1)\leq x \leq \frac{\rho}{4}(2\tau_2-1)\}}
\sum_{j=0}^{n-1} \frac{e^{-\frac{1}{2}(2x- \frac{\rho}{2}(2\tau(j)-1))^2}}{\int_{\frac{\rho}{2}(2\tau_1-1)}^{\frac{\rho}{2}(2\tau_2-1)} e^{-\frac{1}{2}(t - \frac{\rho}{2}(2\tau(j)-1))^2} \, dt } \cdot(1+o(1)).
\end{align*}
Here, $z = \sqrt{n\Lap Q(1)}(\zeta-1)$ and $x = \re z$.
Now theorem follows from the Riemann sum approximation.
\end{proof}

Before moving on to the proof of Corollary~\ref{cor:hardwall one-sided}, let us briefly discuss the circular limit \eqref{Unitary limit}. 
Note that for $z,w$ with $ \frac{\rho^2}{8}(2\tau_1 -1) \leq \re z,\re w \leq \frac{\rho^2}{8}(2\tau_2-1) $
\begin{equation*}
 \widetilde{K}^{\tau_1,\tau_2}(z,w) := \frac{1}{a^2} K^{\tau_1,\tau_2}(\frac{z}{a},\frac{w}{a})
    =\frac{1}{a^2}\, e^{\frac{1}{a^2}(z\bar{w}-\frac{1}{2}|z|^2 - \frac{1}{2}|w|^2)} 
    \int_{-\rho/2}^{\rho/2} \frac{e^{-\frac{1}{2}(\frac{1}{a}(z+\bar{w})-\xi)^2}}{\int_{\rho(\tau_1-\frac{1}{2})}^{\rho(\tau_2-\frac{1}{2})} e^{-\frac{1}{2}(t-\xi)^2}\, dt } \, d\xi,
\end{equation*}
where $a=\rho/2$. As $\rho \to 0$, (i.e. $a \to 0$), we have
\begin{align*}
    \widetilde{K}^{\tau_1,\tau_2}(z,w) 
    &= \frac{1}{a^2} e^{-\frac{1}{a^2}((\re z)^2 + (\re w)^2)}e^{-\frac{1}{a^2}i \im (z^2 - w^2)} \int_{-\rho/2}^{\rho/2} \frac{e^{\frac{1}{a}\xi(z+\bar{w}) -\frac{1}{2}\xi^2 } }{ \int_{\rho(\tau_1-\frac{1}{2})}^{\rho(\tau_2-\frac{1}{2})} e^{-\frac{1}{2}(t-\xi)^2}\, dt }\, d\xi \cdot (1+o(1))
    \\
    &= \frac{1}{2a^2(\tau_2-\tau_1)} e^{-\frac{1}{a^2}((\re z)^2 + (\re w)^2)}c(z,w) \int_{-1}^{1} e^{\xi(z+\bar{w})} \, d\xi \cdot (1+o(1)).
\end{align*}
Here $c(z,w) = \exp(-\frac{1}{a^2}i \im(z^2-w^2))$ is a cocycle, which does not contribute when forming the determinant, see e.g. \cite{MR3975882}.  
Notice that for any test function $f$,
\begin{equation*}
    \int_{a^2(\tau_1-\frac{1}{2})}^{a^2(\tau_2-\frac{1}{2})} \frac{1}{a^2(\tau_2-\tau_1)} e^{-\frac{1}{a^2} x^2} f(x) \, dx \to f(0),\qquad a\to 0,
\end{equation*}
which leads to \eqref{Unitary limit}.

We now prove Corollary~\ref{cor:hardwall one-sided}.

\begin{proof}[Proof of Corollary~\ref{cor:hardwall one-sided}]

The first assertion is an immediate consequence of Theorem~\ref{thm:hardwall two-sided}.

We shall show the second assertion \eqref{R hardwall one-sided limit}. 
Note that the first limit when $\tau=1$ is trivial.
In  the case when $0<\tau <1$, it follows from \eqref{R hardwall one-sided} that 
\begin{equation} \label{RHW}
\widetilde{R}^\tau(z) = \1_{\{-\infty < x \leq 0\}} \frac{c}{\rho} \int_{-c\tau}^{c(1-\tau)}   
\frac{e^{-\frac{1}{2}(\frac{2c}{\rho}x - \frac{\rho}{c}\xi)^2}}{\int_{-\infty}^{0}e^{-\frac{1}{2}(t-\frac{\rho}{c}\xi)^2 } \, dt } \, d\xi, \qquad c=c(\tau).
\end{equation}
Observe that for each $\xi \in [-c\tau , 0]$, we have
\begin{align*}
\lim_{\rho\to \infty}
\frac{c}{\rho}   
\frac{e^{-\frac{1}{2}(\frac{2c}{\rho}x - \frac{\rho}{c}\xi)^2}}{\int_{-\infty}^{0}e^{-\frac{1}{2}(t-\frac{\rho}{c}\xi)^2 } dt} =0.
\end{align*}
On the other hand, for each $\xi\in [0, c(1-\tau)]$, it follows from the asymptotic expansion
$$ 
\int_{-\infty}^{0} e^{-\frac{1}{2}(t-\frac{\rho}{c}\xi)^2} \, dt =  e^{-\frac{1}{2}(\frac{\rho}{c}\xi)^2} \Big(\frac{c}{\xi\rho} + O\Big(\frac{c}{\xi\rho}\Big)^3\Big),\qquad \rho \to \infty,$$
that
\begin{align*}
\lim_{\rho\to\infty}\frac{c}{\rho}   
\frac{e^{-\frac{1}{2}(\frac{2c}{\rho}x - \frac{\rho}{c}\xi)^2}}{\int_{-\infty}^{0}e^{-\frac{1}{2}(t-\frac{\rho}{c}\xi)^2 } dt} = \xi \cdot e^{2x\cdot\xi}.
\end{align*}
This completes the proof. 

\end{proof}

\section{Maximal and minimal modulus} \label{Section_modulus}

In this section, we prove Theorem \ref{Thm_M}. 

Note that by \eqref{Gibbs}, the distribution of the maximal modulus $|\zeta|_n =\max_{1\leq j \leq n} |\zeta_j|$ is written as 
\begin{equation}  \label{Pn max}
	\Prob_n(|\zeta|_n \leq r) = \frac{1}{Z_n}\int_{D(0,r)^n} e^{-\Ham_n } \prod_{j=1}^{n} dA(\zeta_j).
\end{equation}
We shall analyse the asymptotic behaviour of \eqref{Pn max}.

\subsection{Soft edge with a confinement parameter} 
Since the external potential is radially symmetric, the distribution function \eqref{Pn max} of $|\zeta|_n$ has a closed form 
\begin{equation}\label{gap_prob}
	\Prob_n(|\zeta|_n \leq r) = \prod_{j=0}^{n-1} \Big(1-\int_{\C\setminus D(0,r)} \frac{|\zeta|^{2j} e^{-nQ^{(\para)}_n(\zeta)}}{\|\zeta^j\|^2_{nQ^{(\para)}_n}}\,dA(\zeta)\Big),
\end{equation}
see e.g. \cite{MR1148410, MR1986426}. Recall that the rescaled modulus $\omega_n$ is defined by \eqref{maxmin soft with para}. 

Write 
\begin{equation}\label{epsilon n}
\epsilon_n(x) = \frac{\rho}{2n\sqrt{\para_2}}\Big(\sqrt{c_n} + \frac{x}{\sqrt{c_n}}\Big), \qquad x\in \R,
\end{equation}
where $c_n$ is given in \eqref{cn cn'}.
Observe that 
\begin{align*}
	\Prob_n(\omega_n \leq x) 
	&= \Prob_n(|\zeta|_n \leq r_1+ \epsilon_n(x)) =\prod_{j=0}^{n-1}\Big(1-  \int_{r_1+\epsilon_n(x)}^{\infty}\frac{2re^{-n(v_{n,j}(r)-(1-\para_2)(Q_n - \check{Q}_n)(r)) }}{\|\zeta^j\|^2_{nQ^{(\para)}_n}}\,dr\Big),
	\end{align*}
where $v_{n,j}$ is defined in \eqref{v_nj}. 

Let us write
\begin{equation} \label{Sn sum}
	E_n(x) : = \sum_{j=0}^{n-1}\int_{r_1+\epsilon_n(x)}^{\infty}  \frac{2re^{-n(v_{n,j}(r)-(1-\para_2)(Q_n-\check{Q}_n)(r))}}{\|\zeta^j\|^2_{nQ_n^{(\para)}}}\, dr.
\end{equation}
The asymptotic of \eqref{Sn sum} is given as follows. 

\begin{lem}\label{S_N}
The function \eqref{Sn sum} satisfies 
\begin{equation} \label{Sn sum lim}
\lim_{n\to\infty} E_n(x) = e^{-x},
\end{equation}
where the convergence is uniform on every compact subset of $\R$.
\end{lem}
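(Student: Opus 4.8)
The plan is to insert into \eqref{Sn sum} the large-$n$ form of the orthogonal norm from Lemma~\ref{lem:L2norm} together with the quadratic expansions \eqref{exp_v} and \eqref{exp:atr1} of $v_{n,j}$ and of $Q_n-\check{Q}_n$ near the outer radius $r_1$, to reduce each summand to a one-sided Gaussian integral after the affine change of variable $s=\tfrac{2n}{\rho}(r-r_1)$, and finally to sum an essentially geometric series in $j$ whose total is pinned down by the precise normalising constant $c_n$ of \eqref{cn cn'}.

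Fix a compact set $\calK\subset\R$ and work uniformly for $x\in\calK$. Since $\epsilon_n(x)\asymp n^{-1}\sqrt{\log n}$ is much smaller than $\delta_n=n^{-1}\log n$, I first restrict the inner integral in \eqref{Sn sum} to $r\in[\,r_1+\epsilon_n(x),\,r_1+\delta_n\,]$: on $[\,r_1+\delta_n,\infty)$ the convex function $r\mapsto v_{n,j}(r)-(1-\para_2)(Q_n-\check{Q}_n)(r)$, whose second derivative near $r_1$ has the positive leading term $\para_2\,g_n''$, grows at least quadratically away from its minimum, so the integrand is super-polynomially small in $n$ there and the corresponding part of the sum is $o(1)$ uniformly in $x$. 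On the remaining range I cancel the factor $e^{-n v_{n,j}(r_{\tau(j)})}$ appearing in $\|\zeta^j\|^2_{nQ_n^{(\para)}}$ against the corresponding factor in the numerator, use $n\bigl(v_{n,j}(r)-v_{n,j}(r_{\tau(j)})\bigr)=\tfrac{2n^2}{\rho^2}(r-r_{\tau(j)})^2(1+o(1))+o(1)$ and $n(1-\para_2)(Q_n-\check{Q}_n)(r)=(1-\para_2)\tfrac{2n^2}{\rho^2}(r-r_1)^2(1+o(1))+o(1)$ from \eqref{exp_v}, \eqref{exp:atr1}, and substitute $s=\tfrac{2n}{\rho}(r-r_1)$; by Lemma~\ref{lem_r tau}, $\tfrac{2n}{\rho}(r_1-r_{\tau(j)})=\rho\,(1-\tfrac jn)+o(1)=:\nu_j$. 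Up to the negligible truncation above and a uniform multiplicative $(1+o(1))$, the $j$-th summand of $E_n(x)$ then equals
\[
\frac{1}{\Phi^{(\para)}\!\bigl(\rho(\tfrac jn-\tfrac12)\bigr)}\int_{s_n(x)}^{\infty}e^{-\frac12(s+\nu_j)^2+\frac{1-\para_2}{2}s^2}\,ds,\qquad s_n(x):=\frac{1}{\sqrt{\para_2}}\Bigl(\sqrt{c_n}+\frac{x}{\sqrt{c_n}}\Bigr),
\]
where the Gaussian tail of the integrand past $r_1+\delta_n$ has been restored at negligible cost and $\Phi^{(\para)}$ is as in \eqref{Phi para}.

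Now I evaluate this asymptotically. Completing the square, the exponent is $-\tfrac{\para_2}{2}s^2-\nu_j s-\tfrac{\nu_j^2}{2}$, and since $s_n(x)\to\infty$ the elementary estimate $\int_a^\infty e^{-\frac{\para_2}{2}u^2}\,du=(\para_2 a)^{-1}e^{-\frac{\para_2}{2}a^2}(1+o(1))$ as $a\to\infty$ shows that the integral above is $(\para_2\,s_n(x))^{-1}\,e^{-\frac{\para_2}{2}s_n(x)^2}\,e^{-\nu_j s_n(x)}(1+o(1))$, uniformly in $j$. Writing $m=n-j$, so that $\nu_j=\tfrac{\rho m}{n}(1+o(1))$ and $\rho(\tfrac jn-\tfrac12)=\tfrac\rho2-\tfrac{\rho m}{n}$ ranges over the compact interval $[-\tfrac\rho2,\tfrac\rho2)$ on which $\Phi^{(\para)}$ is bounded away from $0$, the decay of $e^{-\nu_j s_n(x)}$ in $m$ confines the bulk of the sum to $m=o(n)$, where $\Phi^{(\para)}(\tfrac\rho2-\tfrac{\rho m}{n})=\Phi^{(\para)}(\tfrac\rho2)(1+o(1))$; summing the resulting geometric series $\sum_{m\ge1}e^{-\rho\,s_n(x)\,m/n}=\tfrac{n}{\rho\,s_n(x)}(1+o(1))$ yields
\[
E_n(x)=\frac{1}{\Phi^{(\para)}(\tfrac\rho2)\,\para_2\,s_n(x)}\,e^{-\frac{\para_2}{2}s_n(x)^2}\cdot\frac{n}{\rho\,s_n(x)}\,(1+o(1))=\frac{n}{\Phi^{(\para)}(\tfrac\rho2)\,\rho\,c_n}\,e^{-c_n/2}\,e^{-x}\,(1+o(1)),
\]
where I used $\para_2 s_n(x)^2=c_n+2x+o(1)$ and $s_n(x)^2=\para_2^{-1}c_n(1+o(1))$. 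Finally \eqref{cn cn'} gives $e^{-c_n/2}=\tfrac{\rho\log n}{n}\cdot 2\Phi^{(\para)}(\tfrac\rho2)$ and $c_n=2\log n-2\log\rho-2\log\log n-2\log\!\bigl(2\Phi^{(\para)}(\tfrac\rho2)\bigr)$, whence $E_n(x)=\tfrac{2\log n}{c_n}\,e^{-x}(1+o(1))\to e^{-x}$, uniformly for $x\in\calK$; this is \eqref{Sn sum lim}.

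The step I expect to be the main obstacle is propagating the error terms with enough precision. The quantity $\tfrac{\para_2}{2}s_n(x)^2$ entering the exponent is of size $\tfrac12\log n$, so the remainders in the expansions of $v_{n,j}$ and of $Q_n-\check{Q}_n$ (from \eqref{exp_v}, \eqref{exp:atr1}, which rest on $|g_n'''|\le Cn$) and the location of $r_{\tau(j)}$ (from Lemma~\ref{lem_r tau}) must be controlled so that, after multiplication by $n$, the passage from the true integrand to the reduced Gaussian form carries a genuine multiplicative $(1+o(1))$ and not an $e^{o(\log n)}$; here one uses $|r-r_{\tau(j)}|=O(\delta_n)$ together with the Lipschitz bound $|(\Lap Q_n)'|=O(n)$ coming from $|g_n'''|\le Cn$. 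The remaining points are routine: the truncation of the $r$-integral is legitimate because the convex exponent makes the discarded tail super-polynomially small, hence $o(1)$ after summing over $j$; the reduced form holds uniformly for all $j\in\{0,\ldots,n-1\}$ on $[\,r_1+\epsilon_n(x),\,r_1+\delta_n\,]$ because $r_1-r_{\tau(j)}\le\rho^2/(2n)\ll\delta_n$ for large $n$; and $\Phi^{(\para)}$ is continuous and strictly positive on the compact interval $[-\tfrac\rho2,\tfrac\rho2]$ of relevant arguments, so the tail of the series in $m$ is negligible.
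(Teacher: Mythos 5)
Your argument follows the same route as the paper's proof of Lemma~\ref{S_N}: both insert the $L^2$-norm asymptotics of Lemma~\ref{lem:L2norm}, reduce each summand via the quadratic expansions to a one-sided Gaussian integral starting at $\frac{2n}{\rho}\epsilon_n(x)=s_n(x)$, apply the Gaussian-tail (equivalently erfc) asymptotic from \eqref{erfc asymp}, and then show that the sum over $j$ concentrates near $j=n-1$ and collapses against $\Phi^{(\para)}(\rho/2)$ and the constant $c_n$. The only structural difference — you sum the near-geometric series in $m=n-j$ directly, whereas the paper first converts to a Riemann integral in $s=-\rho(1-j/n)$ and then applies \eqref{asym_in} — is cosmetic, producing identical intermediate expressions. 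One small imprecision: the claim that the tail integral equals $(\para_2 s_n(x))^{-1}e^{-\para_2 s_n(x)^2/2}e^{-\nu_j s_n(x)}(1+o(1))$ ``uniformly in $j$'' is not literally correct, since this drops the factors $e^{-\nu_j^2/2}$ and $\frac{s_n(x)}{s_n(x)+\nu_j/\para_2}$, which are bounded but not $1+o(1)$ when $\nu_j\asymp 1$; however, you immediately note that the dominant contribution comes from $\nu_j=o(1)$, where these dropped factors are indeed $1+o(1)$, so the final conclusion is unaffected. Keeping those factors until after the concentration step (as the paper does via the $-\frac12(1-\frac{1}{\para_2})(\rho(1-\tau(j)))^2$ term) would make the claim airtight.
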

\begin{proof}
By Lemma \ref{lem_r tau}, the function $v_{n,j}$ has a critical point at $r_{\tau(j)}$, where $\tau(j)=j/n$ and 
$r_{\tau}$ satisfies the asymptotic expansion
$r_{\tau} = 1 - \frac{\rho^2}{4n}(1-2\tau) + o(n^{-1})$.
By Lemma \ref{lem:L2norm}, we obtain 
\begin{align*}
	&\quad {\|\zeta^{j}\|_{nQ^{(\para)}_n}^{-2}}\int_{r_1+\epsilon_n(x)}^{\infty}\!\!\! {2r e^{-n(v_{n,j}(r) -(1-\para_2)(Q_n-\check{Q}_n)(r)) }}\, dr 
	\\
	&= \frac{2n}{\rho\, \Phi^{(\para)}(\rho(\tau(j)-\frac{1}{2}))}\int_{r_1+\epsilon_n(x)}^{\infty}\!\! re^{-n(v_{n,j}(r)-v_{n,j}(r_{\tau(j)}) - (1-\para_2)(Q_n-\check{Q}_n)(r))}\,dr\cdot(1+o(1)),
\end{align*} 
where $o(1)\to 0$ as $n\to \infty$ uniformly for $j$ with $0\leq j \leq n-1$.

Recall that $\delta_n$ is given by \eqref{delta n}. By Taylor series expansion, we have
\begin{align}
	\begin{split}
		&\quad \int_{r_1+\epsilon_n(x)}^{r_1+\delta_n} \!re^{-n(v_{n,j}(r)-v_{n,j}(r_{\tau(j)}) - (1-\para_2)(Q_n - \check{Q}_n)(r))} \, dr \\
		&=
		 \int_{r_1+\epsilon_n(x)}^{r_1+\delta_n} \!e^{-\frac{2n^2}{\rho^2}((r-r_{\tau(j)})^2 - (1-\para_2)(r-r_1)^2)} \, dr \cdot(1+o(1))\\
		&= \frac{\rho}{2n}\int_{\frac{2n}{\rho}\epsilon_n(x)}^{\infty} e^{-\frac{1}{2}((t +\rho(1-\tau(j)))^2 -(1-\para_2)t^2 )} \, dt \cdot(1+o(1))
	\end{split}
\end{align}
where $o(1)\to 0$ uniformly for $j$. 
Note that the complementary error function satisfies the asymptotic expansion: as $z \to \infty$,
\begin{equation} \label{erfc asymp}
\erfc(z) = \frac{ e^{-z^2} }{ \sqrt{\pi}\,z } \Big(1+O\big(\frac{1}{z^2}\big)\Big),
\end{equation}
see e.g. \cite[Eq.(7.12.1)]{olver2010nist}.
Using this asymptotic together with \eqref{cn cn'} and \eqref{epsilon n}, we obtain that
\begin{align}
	\begin{split}
	\int_{\frac{2n}{\rho}\epsilon_n(x)}^{\infty} e^{-\frac{\para_2}{2}\big(t +\frac{\rho}{\para_2}(1-\tau(j))\big)^2}\, dt 
	&= \sqrt{ \frac{\pi}{ 2 \, \para_2 } } \erfc\Big(  \sqrt{ \frac{\para_2}{2} } \Big( \frac{\rho}{\para_2}(1-\tau(j))+ \frac{2n}{\rho}\epsilon_n(x)  \Big)  \Big)
	\\
	&= \frac{\rho}{2n\, \para_2 \, \epsilon_n(x)} e^{-\frac{\para_2}{2}\big(\frac{2n}{\rho}\epsilon_n(x)+\frac{\rho}{\para_2}(1-\tau(j))\big)^2}\cdot(1+o(1)),
	\end{split}
\end{align}
where $o(1)\to 0$ as $N\to \infty$ uniformly for all $j$ and all $x$ in every compact subset of $\R$. This gives 
\begin{equation}
E_n(x) = \frac{\rho}{2n\, \para_2 \, \epsilon_n(x)}\sum_{j=0}^{n-1} \frac{e^{-\frac{\para_2}{2}(\frac{2n}{\rho}\epsilon_n(x)+\frac{\rho}{\para_2}(1-\tau(j)))^2 - \frac{1}{2}(1-\frac{1}{\para_2})(\rho(1-\tau(j)))^2}}{\Phi^{(\para)}(\rho(\tau(j) - \frac{1}{2}))} \cdot (1+o(1)).
\end{equation}
As $n\to \infty$, the Riemann sum with step length $n^{-1}$ converges to the following integral:
\begin{align*}
	 \frac{\rho}{n}\sum_{j=0}^{n-1} \frac{e^{-\frac{\para_2}{2}(\frac{2n}{\rho}\epsilon_n(x)+\frac{\rho}{\para_2}(1-\tau(j)))^2 - \frac{1}{2}(1-\frac{1}{\para_2})(\rho(1-\tau(j)))^2}}{\Phi^{(\para)}(\rho(\tau(j) - \frac{1}{2}))} 
	&= \int_{-\rho}^{0} \frac{e^{-\frac{\para_2}{2}(\frac{2n}{\rho}\epsilon_n(x)-
\frac{s}{\para_2})^2 - \frac{1}{2}(1-\frac{1}{\para_2})s^2} }{\Phi^{(\para)}(s + \frac{\rho}{2})} \, ds  \cdot (1+o(1))
\\
	&= e^{ -\frac{1}{2}(\sqrt{c_n}+\frac{x}{\sqrt{c_n}})^2  } \int_{-\rho}^{0} \frac{e^{ \sqrt{\frac{{c_n}}{{\para_2}}} s - \frac{1}{2}s^2 }}{\Phi^{(\para)}(s+\frac{\rho}{2})} \, ds \cdot (1+o(1)),
\end{align*}
where the convergence is uniform for $x$ in every compact subset of $\R$. Since the above integral has the asymptotic expansion 
\begin{equation}\label{asym_in}
	\int_{-\rho}^{0} \frac{e^{-\frac{1}{2}s^2 + \sqrt{\frac{c_n}{\para_2}}s}}{\Phi^{(\para)}(s+\frac{\rho}{2})} \, ds = \sqrt{\frac{\para_2}{c_n}}\frac{1}{\Phi^{(\para)}(\frac{\rho}{2})} + O(c_n^{-1}),\qquad (n\to \infty),
\end{equation}
we obtain 
\begin{align*}
	E_n(x) = \frac{n}{\rho \,c_n \Phi^{(\para)}(\frac{\rho}{2})} e^{-\frac{1}{2}c_n - x}\cdot(1+o(1)) = e^{-x}\cdot (1+o(1)),
\end{align*}
where $o(1)\to 0$ as $n\to \infty$ uniformly for $x$ in every compact subset of $\R$.
This gives the desired convergence \eqref{Sn sum lim}.
\end{proof}

We now prove the first assertion of Theorem~\ref{Thm_M}.

\begin{proof}[Proof of Theorem~\ref{Thm_M} (i)]
It is a direct consequence of Lemma \ref{S_N} that 
$\log\Prob(\omega_n \leq x)$ converges to $-e^{-x}$, i.e. $\omega_n$ converges in distribution to the Gumbel law as $n\to \infty$. 

For the case of the minimal modulus $u_n$, the distribution function has the form
\begin{equation}
\Prob(u_n \leq x) 
	= \Prob(|\zeta|_1 \geq  r_0 - \epsilon_n'(x)), 
\end{equation}
where 
\begin{equation}
\epsilon_n'(x) = \frac{\rho}{2n\sqrt{\para_1}} \Big(\sqrt{c_n'} + \frac{x}{\sqrt{c_n'}} \Big).
\end{equation}
Recall that $c_n'$ is given in \eqref{cn cn'}. 
(Here and in the rest of the proof, the notation $'$ does not denote the differentiation.)
In the same way as for the maximal modulus, the distribution function can be expressed as the product
\begin{align*} 
\Prob(u_n \leq x) = \prod_{j=0}^{n-1} \Big( 1 - \int_{ |\zeta| < r_0 - \epsilon_n'(x)} \frac{|\zeta|^{2j} e^{-nQ_n^{(\para)}(\zeta)}}{\|\zeta^j\|^2_{nQ_n^{(\para)}}}  dA(\zeta)  \Big).
\end{align*}
Thus, we need to compute the sum
\begin{equation}
E_n'(x) := \sum_{j=0}^{n-1} \int_{0}^{r_0 - \epsilon_n'(x)} \frac{2r e^{-n(v_{n,j}(r) - (1-\para_1)(Q_n - \check{Q}_n)(r))}}{\|\zeta^j\|^2_{nQ_n^{(\para)}}} dr.
\end{equation}
As in the Lemma \ref{S_N}, we obtain the following asymptotics:
\begin{align*} 
E_n'(x) &= \sum_{j=0}^{n-1} \frac{2n}{\rho \, \Phi^{(\para)}(\rho(\tau(j)-\frac{1}{2}))} \int_{r_0-\delta_n}^{r_0 - \epsilon_n'(x)}\!\! e^{-\frac{2n^2}{\rho^2}((r-r_{\tau(j)})^2 -(1-\para_1)(r-r_0)^2)} \, dr \cdot (1+o(1))\\
& = \sum_{j=0}^{n-1} \frac{1}{\Phi^{(\para)}(\rho(\tau(j)-\frac{1}{2}))} e^{-\frac{1}{2}(1-\frac{1}{\para_1})(\rho\tau(j))^2}\int_{\frac{2n}{\rho}\epsilon_n'(x)}^{\infty} e^{-\frac{\para_1}{2}(t+\frac{1}{\para_1}\rho \tau(j))^2}  \, dt \cdot (1+o(1)).
\end{align*}
Using again the asymptotic behaviour \eqref{erfc asymp} of the complementary error function, we obtain the Riemann sum approximation
\begin{align*}
E_n'(x) 
&= \frac{\rho}{2n\, \para_1 \, \epsilon_n'(x)} \sum_{j=0}^{n-1} \frac{e^{-\frac{\para_1}{2}(\frac{2n}{\rho}\epsilon_n'(x)+\frac{\rho}{\para_1}\tau(j))^2 - \frac{1}{2}(1-\frac{1}{\para_1})(\rho\tau(j))^2}}{\Phi^{(\para)}(\rho(\tau(j) - \frac{1}{2}))} \cdot (1+o(1))\\
&= \frac{1}{2\, \para_1 \, \epsilon_n'(x)} \int_{0}^{\rho}  \frac{e^{-\frac{\para_1}{2}(\frac{2n}{\rho}\epsilon_n'(x)+\frac{s}{\para_1})^2 - \frac{1}{2}(1-\frac{1}{\para_1})s^2}}{\Phi^{(\para)}(s - \frac{\rho}{2})} \, ds \cdot (1+o(1)). 
\end{align*}
The same argument as in \eqref{asym_in} gives 
\begin{equation}
E_n'(x) = \frac{n}{\rho\,c_n' \, \Phi^{(\para)}(-\frac{\rho}{2})} e^{-\frac{1}{2}c_n' - x} \cdot (1+o(1)) = e^{-x}\cdot (1+o(1)).
\end{equation}
This completes the proof.
\end{proof}

\subsection{Soft/hard edge conditions}

In this subsection, we prove the second assertion of Theorem~\ref{Thm_M}.

\begin{proof}[Proof of Theorem~\ref{Thm_M} (i)]
For the soft/hard edge ensemble, the distribution function of $|\zeta|_n$ has the same form as above \eqref{gap_prob} (cf. \cite{MR4179777}), i.e. 
\begin{equation}
	\Prob(|\zeta|_n \leq r) = \prod_{j=0}^{n-1} \Big(1-
\int_{S_n\setminus D(0,r)} \frac{|\zeta|^{2j}e^{-nQ_n^{(\infty)}(\zeta)}}{\|\zeta^j\|^2_{nQ_n^{(\infty)}}} \, dA(\zeta) \Big).
\end{equation}
Here, $Q_n^{(\infty)}$ is the soft/hard edge potential with $\para_1 = \para_2 = +\infty$, i.e. $Q_n^{(\infty)} = Q_n \cdot \1_{S_n} + \infty \cdot \1_{\C\setminus S_n}$. 

Recall that we use the rescaling $\omega_n$ in \eqref{maxmin soft/hard}. 
Thus for $x\leq 0$ we obtain
\begin{equation} \label{Pn omega hard}
	\Prob(\omega_n\leq x) = \prod_{j=0}^{n-1}\Big(1- \int_{r_1+\frac{\rho^2 x}{cn^2}}^{r_1}\frac{2r^{2j+1}e^{-nQ_n(r)}}{\|\zeta^j\|_{nQ_n^{(\infty)}}^2} \, dr \Big).
\end{equation}
Next, we shall compute  
\begin{equation} \label{sum bfRN inf}
	\sum_{j=0}^{n-1} \int_{r_1+\frac{\rho^2 x}{cn^2}}^{r_1}\frac{2r^{2j+1}e^{-nQ_n(r)}}{\|\zeta^j\|_{nQ_n^{(\infty)}}^2} \, dr = \int_{r_1+\frac{\rho^2 x}{cn^2}}^{r_1} 2r\, \bfR_n^{(\infty)}(r)\, dr.
\end{equation}
Note that the right-hand side of \eqref{sum bfRN inf} can be approximated in terms of the rescaled correlation kernel $R_n$ defined in \eqref{R_n} as 
\begin{align}
	\begin{split}
	\int_{r_1+\frac{\rho^2x}{cn^2}}^{r_1} 2r \, \bfR_n^{(\infty)}(r)\, dr 	&= \frac{n^2}{\rho^{2}}\int_{r_1+\frac{\rho^2x}{cn^2}}^{r_1} 2 R_n^{(\infty)}(n\rho^{-1}(r-1) )\, dr \cdot (1+o(1))\\
	&= \frac{2n}{\rho}\int_{\frac{n}{\rho}(r_1 - 1) +\frac{\rho}{cn}x}^{\frac{n}{\rho}(r_1-1)} R_n^{(\infty)}(s) \,ds \cdot (1+o(1)).
	\end{split}
\end{align}
Since the rescaled kernel $R_n^{(\infty)}$ converges to $R^{(\infty)}$ uniformly in every compact subset of $\R_{-}$ (Theorem~\ref{thm:onelimit}), we have
\begin{equation} \label{int x hard}
	\frac{2n}{\rho}\int_{\frac{\rho}{4}+\frac{\rho}{cn}x}^{\frac{\rho}{4}} R_n^{(\infty)}(s) \,ds = - \frac{2n}{\rho}\cdot \frac{\rho x}{cn}R^{(\infty)}(\frac{\rho}{4}) + o(1)
 =  -x + o(1).
\end{equation}
Combining this with \eqref{Pn omega hard}, \eqref{sum bfRN inf} and \eqref{int x hard}, we conclude that $\Prob(\omega_n \leq x)\to e^{x}$ as $n\to \infty$ for $x\in \R_{-}$.

For the minimal modulus $u_n$ given in \eqref{maxmin soft/hard}, we repeat the above argument and obtain
\begin{align*}
\log \Prob(u_n \leq x) 
&= - \int_{r_0}^{r_0 - \frac{\rho^2 x}{c' n^2}} 2r\, \bfR_n^{(\infty)}(r) \, dr \cdot (1+o(1)) \\
&= -\frac{2n}{\rho} \int_{-\frac{\rho}{4}}^{-\frac{\rho}{4}-\frac{\rho x}{c' n}} R_n^{(\infty)}(s) \, ds \cdot (1+o(1)) = x + o(1)
\end{align*}
uniformly for $x \in \R_{-}$ as $n\to \infty$. The proof of Theorem~\ref{Thm_M} is finished.
\end{proof}

\subsection*{Acknowledgements} It is our pleasure to thank Yacin Ameur for helpful discussions.

\bibliographystyle{abbrv}
\bibliography{RMTbib}

\begin{thebibliography}{10}

\bibitem{akemann2010interpolation}
G.~Akemann and M.~Bender.
\newblock Interpolation between {A}iry and {P}oisson statistics for unitary
  chiral non-{H}ermitian random matrix ensembles.
\newblock {\em J. Math. Phys.}, 51(10):103524, 2010.

\bibitem{akemann2021scaling}
G.~Akemann, S.-S. Byun, and N.-G. Kang.
\newblock Scaling limits of planar symplectic ensembles.
\newblock {\em preprint arXiv:2106.09345}, 2021.

\bibitem{akemann2016universality}
G.~Akemann, M.~Cikovic, and M.~Venker.
\newblock Universality at weak and strong non-{H}ermiticity beyond the elliptic
  {G}inibre ensemble.
\newblock {\em Comm. Math. Phys.}, 10.1007/s00220-018-3201-1, 2018.

\bibitem{AB21}
Y.~Ameur and S.-S. Byun.
\newblock Almost-{H}ermitian random matrices and bandlimited point processes.
\newblock {\em preprint arXiv:2101.03832}, 2021.

\bibitem{ameur2021szego}
Y.~Ameur and J.~Cronvall.
\newblock Szeg\"{o} type asymptotics for the reproducing kernel in spaces of
  full-plane weighted polynomials.
\newblock {\em preprint arXiv:2107.11148}, 2021.

\bibitem{MR2817648}
Y.~Ameur, H.~Hedenmalm, and N.~Makarov.
\newblock Fluctuations of eigenvalues of random normal matrices.
\newblock {\em Duke Math. J.}, 159(1):31--81, 2011.

\bibitem{MR3342661}
Y.~Ameur, H.~Hedenmalm, and N.~Makarov.
\newblock Random normal matrices and {W}ard identities.
\newblock {\em Ann. Probab.}, 43(3):1157--1201, 2015.

\bibitem{MR3975882}
Y.~Ameur, N.-G. Kang, and N.~Makarov.
\newblock Rescaling {W}ard identities in the random normal matrix model.
\newblock {\em Constr. Approx.}, 50(1):63--127, 2019.

\bibitem{MR4030288}
Y.~Ameur, N.-G. Kang, N.~Makarov, and A.~Wennman.
\newblock Scaling limits of random normal matrix processes at singular boundary
  points.
\newblock {\em J. Funct. Anal.}, 278(3):108340, 2020.

\bibitem{MR4169375}
Y.~Ameur, N.-G. Kang, and S.-M. Seo.
\newblock On boundary confinements for the {C}oulomb gas.
\newblock {\em Anal. Math. Phys.}, 10(4):Paper No. 68, 42, 2020.

\bibitem{ameur2018random}
Y.~Ameur, N.-G. Kang, and S.-M. Seo.
\newblock The random normal matrix model: insertion of a point charge.
\newblock {\em Potential Anal. (online)}, 2021.

\bibitem{bender2010edge}
M.~Bender.
\newblock Edge scaling limits for a family of non-{H}ermitian random matrix
  ensembles.
\newblock {\em Probab. Theory Relat. Fields}, 147(1):241--271, 2010.

\bibitem{MR2921180}
P.~M. Bleher and A.~B.~J. Kuijlaars.
\newblock Orthogonal polynomials in the normal matrix model with a cubic
  potential.
\newblock {\em Adv. Math.}, 230(3):1272--1321, 2012.

\bibitem{byun2022symplectic}
S.-S. Byun.
\newblock Symplectic induced {G}inibre ensemble in the almost-circular regime.
\newblock {\em In preparation}.

\bibitem{byun2021wronskian}
S.-S. Byun, M.~Ebke, and S.-M. Seo.
\newblock Wronskian structures of planar symplectic ensembles.
\newblock {\em preprint arXiv:2110.12196}, 2021.

\bibitem{byun2021lemniscate}
S.-S. Byun, S.-Y. Lee, and M.~Yang.
\newblock Lemniscate ensembles with spectral singularity.
\newblock {\em preprint arXiv:2107.07221}, 2021.

\bibitem{chafai2020macroscopic}
D.~Chafa{\"\i}, D.~Garc{\'\i}a-Zelada, and P.~Jung.
\newblock Macroscopic and edge behavior of a planar jellium.
\newblock {\em J. Math. Phys.}, 61(3):033304, 2020.

\bibitem{chafai2014note}
D.~Chafa{\"\i} and S.~P{\'e}ch{\'e}.
\newblock A note on the second order universality at the edge of {C}oulomb
  gases on the plane.
\newblock {\em J. Stat. Phys.}, 156(2):368--383, 2014.

\bibitem{charlier2021large}
C.~Charlier.
\newblock Large gap asymptotics on annuli in the random normal matrix model.
\newblock {\em preprint arXiv:2110.06908}, 2021.

\bibitem{claeys2008universality}
T.~Claeys and A.~B.~J. Kuijlaars.
\newblock Universality in unitary random matrix ensembles when the soft edge
  meets the hard edge.
\newblock In {\em Integrable systems and random matrices}, volume 458 of {\em
  Contemp. Math.}, pages 265--279. Amer. Math. Soc., Providence, RI, 2008.

\bibitem{Dubach2020}
G.~Dubach.
\newblock Symmetries of the quaternionic {G}inibre ensemble.
\newblock {\em Random Matrices: Theory and Applications}, 10(01):2150013, Jan
  2020.

\bibitem{MR2881072}
J.~Fischmann, W.~Bruzda, B.~A. Khoruzhenko, H.-J. Sommers, and
  K.~\.{Z}yczkowski.
\newblock Induced {G}inibre ensemble of random matrices and quantum operations.
\newblock {\em J. Phys. A}, 45(7):075203, 31, 2012.

\bibitem{forrester2010log}
P.~J. Forrester.
\newblock {\em Log-gases and {R}andom {M}atrices (LMS-34)}.
\newblock Princeton University Press, Princeton, 2010.

\bibitem{fyodorov1997almost}
Y.~V. Fyodorov, B.~A. Khoruzhenko, and H.-J. Sommers.
\newblock Almost {H}ermitian random matrices: crossover from {W}igner-{D}yson
  to {G}inibre eigenvalue statistics.
\newblock {\em Phys. Rev. Lett.}, 79(4):557--560, 1997.

\bibitem{MR1431718}
Y.~V. Fyodorov, B.~A. Khoruzhenko, and H.-J. Sommers.
\newblock Almost-{H}ermitian random matrices: eigenvalue density in the complex
  plane.
\newblock {\em Phys. Lett. A}, 226(1-2):46--52, 1997.

\bibitem{MR1634312}
Y.~V. Fyodorov, H.-J. Sommers, and B.~A. Khoruzhenko.
\newblock Universality in the random matrix spectra in the regime of weak
  non-{H}ermiticity.
\newblock {\em Ann. Inst. H. Poincar\'{e} Phys. Th\'{e}or.}, 68(4):449--489,
  1998.

\bibitem{garcia2018edge}
D.~Garc{\'\i}a-Zelada.
\newblock Edge fluctuations for random normal matrix ensembles.
\newblock {\em preprint arXiv:1812.11170}, 2018.

\bibitem{HM13}
H.~Hedenmalm and N.~Makarov.
\newblock Coulomb gas ensembles and {L}aplacian growth.
\newblock {\em Proc. Lond. Math. Soc. (3)}, 106(4):859--907, 2013.

\bibitem{hedenmalm2017planar}
H.~Hedenmalm and A.~Wennman.
\newblock Planar orthogogonal polynomials and boundary universality in the
  random normal matrix model.
\newblock {\em Acta Math. (to appear), arXiv:1710.06493}, 2017.

\bibitem{hedenmalm2020riemann}
H.~Hedenmalm and A.~Wennman.
\newblock Riemann-{H}ilbert hierarchies for hard edge planar orthogonal
  polynomials.
\newblock {\em preprint arXiv:2008.02682}, 2020.

\bibitem{MR1487983}
K.~Johansson.
\newblock On fluctuations of eigenvalues of random {H}ermitian matrices.
\newblock {\em Duke Math. J.}, 91(1):151--204, 1998.

\bibitem{khoruzhenko2021truncations}
B.~A. Khoruzhenko and S.~Lysychkin.
\newblock Truncations of random symplectic unitary matrices.
\newblock {\em preprint arXiv:2111.02381}, 2021.

\bibitem{MR1148410}
E.~Kostlan.
\newblock On the spectra of {G}aussian matrices.
\newblock volume 162/164, pages 385--388. 1992.
\newblock Directions in matrix theory (Auburn, AL, 1990).

\bibitem{MR4071093}
T.~Nagao, G.~Akemann, M.~Kieburg, and I.~Parra.
\newblock Families of two-dimensional {C}oulomb gases on an ellipse:
  correlation functions and universality.
\newblock {\em J. Phys. A}, 53(7):075201, 36, 2020.

\bibitem{olver2010nist}
F.~W. Olver, D.~W. Lozier, R.~F. Boisvert, and C.~W. Clark~(Editors).
\newblock {\em NIST Handbook of Mathematical Functions}.
\newblock Cambridge University Press, Cambridge, 2010.

\bibitem{osborn2004universal}
J.~C. Osborn.
\newblock Universal results from an alternate random-matrix model for {QCD}
  with a baryon chemical potential.
\newblock {\em Phys. Rev. Lett.}, 93(22):222001, 2004.

\bibitem{MR1986426}
B.~Rider.
\newblock A limit theorem at the edge of a non-{H}ermitian random matrix
  ensemble.
\newblock {\em J. Phys. A}, 36(12):3401--3409, 2003.

\bibitem{ST}
E.~B. Saff and V.~Totik.
\newblock {\em Logarithmic potentials with external fields}, volume 316 of {\em
  Grundlehren der Mathematischen Wissenschaften [Fundamental Principles of
  Mathematical Sciences]}.
\newblock Springer-Verlag, Berlin, 1997.
\newblock Appendix B by Thomas Bloom.

\bibitem{seo2020edge}
S.-M. Seo.
\newblock Edge behavior of two-dimensional {C}oulomb gases near a hard wall.
\newblock {\em Ann. Henri Poincar\'{e} (online), arXiv:2010.08818}, 2020.

\bibitem{MR4179777}
S.-M. Seo.
\newblock Edge scaling limit of the spectral radius for random normal matrix
  ensembles at hard edge.
\newblock {\em J. Stat. Phys.}, 181(5):1473--1489, 2020.

\bibitem{MR1748745}
K.~\.{Z}yczkowski and H.-J. Sommers.
\newblock Truncations of random unitary matrices.
\newblock {\em J. Phys. A}, 33(10):2045--2057, 2000.

\end{thebibliography}

\end{document}